\documentclass[11pt]{article}

\usepackage{tgpagella}
\usepackage[utf8]{inputenc}
\usepackage[T1]{fontenc}
\usepackage{amssymb}
\usepackage{amsfonts}
\usepackage{amsmath}
\usepackage{amsthm}
\usepackage{mathrsfs}
\usepackage{comment}

\newcommand{\PA}{\textnormal{PA}}

\newcommand{\set}[2]{\lbrace #1 \ \mid \ #2 \rbrace}
\newcommand{\res}{\upharpoonright}
\newcommand{\CT}{\textnormal{CT}}
\newcommand{\df}[1]{\textbf{#1}}
\newcommand{\num}[1]{\underline{#1}}
\newcommand{\La}{\mathscr{L}}
\newcommand{\ind}{\textnormal{ind}}
\newcommand{\LPA}{\La_{\PA}}
\newcommand{\ElDiag}{\textnormal{ElDiag}}
\newcommand{\IDelta}{\textnormal{I} \Delta}

\newcommand{\Form}{\textnormal{Form}}
\newcommand{\Term}{\textnormal{Term}}
\newcommand{\ClTerm}{\textnormal{ClTerm}}
\newcommand{\TermSeq}{\textnormal{TermSeq}}
\newcommand{\ClTermSeq}{\textnormal{ClTermSeq}}
\newcommand{\Sent}{\textnormal{Sent}}
\newcommand{\val}[1]{{#1}^{\circ}}
\newcommand{\Var}{\textnormal{Var}}

\newcommand{\Comp}{\textnormal{Comp}}
\newcommand{\Val}{\textnormal{Val}}
\newcommand{\FV}{\textnormal{FV}}
\newcommand{\sd}{\textnormal{dpt}}
\newcommand{\SRP}{\textnormal{SRP}}

\newtheorem{theorem}{Theorem}
\newtheorem{proposition}{Proposition}
\newtheorem{lemma}{Lemma}

\theoremstyle{definition}
\newtheorem{definition}{Definition}
\newtheorem{remark}{Remark}
\newtheorem{example}{Example}

\theoremstyle{plain}
\newtheorem*{lemma*}{Lemma}

\title{Disjunctions with stopping conditions}
\author{Roman Kossak, Bartosz Wcisło}

\begin{document}

	\maketitle

\begin{abstract}
	We introduce a tool for analysing models of $\CT^-$, the compositional truth theory over Peano Arithmetic. We present a new proof of Lachlan's theorem that the arithmetical part of models of $\CT^-$ are recursively saturated. We also use this tool to provide a new proof of theorem from \cite{lelykwcislomodels} that all models of $\CT^-$ carry a partial inductive truth predicate. Finally, we construct a partial truth predicate defined for a set of formulae whose syntactic depth forms a nonstandard cut which cannot be extended to a full truth predicate satisfying $\CT^-$.
\end{abstract}

\section{Introduction}
In 1979, Alistair Lachlan visited Warsaw. There, together with Henryk Kotlarski and Stanisław Krajewski, he worked on nonstandard satisfaction classes in models of arithmetic, and, in particular, he proved that a model that admits a full satisfaction class must be recursively saturated \cite{lachlan}. The result  is an easy observation if one assumes in addition that the satisfaction class is inductive, but it was quite surprising that the result holds without that assumption, and the proof was highly original. Since then, the proof has been simplified somewhat, but still its standard presentation, such as the one in \cite{kaye},  involves some seemingly necessary technicalities. In this paper, we give a proof of Lachlan's theorem that is essentially the standard one, but before giving the proof,  we isolate the part of the argument, that can be dubbed Lachlan's trick, and present it as a specific tool that is later used to prove other results. That tool---disjunctions with stopping condition---is presented in Section 3, after an example that motivates the definition, followed by Section 4, in which we give a proof of Lachlan's theorem.

The original proof of Lachlan had a reputation for lacking any initial motivation and for being very difficult to grasp on the intuitive level. One of our prime aims in this paper is to present Lachlan's argument not as an isolated and \textit{ad hoc} trick, but as a clearly motivated and reusable technique. 

Lachlan's proof and some of its consequences were analyzed by Stuart Smith in this Ph. D.~thesis \cite{smith_phd}. In particular, Smith showed that if $S$ is a full satisfaction class on a model $M$ of $\PA$, then there is an undefinable class $X$ of $M$ that is definable in $(M,S)$ \cite{smith}. That result  shows that rather classless, recursively saturated models of $\PA$ do not admit full satisfaction classes. In Section 5, we use disjunctions with stopping condition to prove a strengthening of Smith's theorem. We show that one can always find an $X$ as above that is an inductive partial satisfaction class. This result has been already published in \cite{lelykwcislomodels}, but the techniques discussed in this paper allowed us to obtain a significantly simpler and cleaner proof which allows us to avoid many technicalities and makes clear the analogy to the original proof of Lachlan's theorem.

The results of sections 3, 4 and 5 are due to the second author. They are a part of his Ph.D.~thesis \cite{bartek}.

In Section 6, we consider a model theoretic question concerning extendability of nonstandard satisfaction classes. Kotlarski, Krajewski, and Lachlan proved that every  countable recursively saturated model admits a full satisfaction class. A new, model theoretic proof of this result was given by Ali Enayat and Albert Visser in \cite{enayatvisser}. This new proof renewed interest in a more detailed study of the variety  of nonstandard satisfaction classes on countable, recursively saturated models of $\PA$. In particular, if $S$ is a satisfaction class on a model $M$, and $N$ is a recursively saturated elementary end extension of $M$, one is interested in conditions that imply that $S$ can be extended to a satisfaction class of $N$. In Section 6, we construct a slightly pathological example showing an obstruction to proving a desired general theorem about existence of such extensions. This part of the paper is our joint work.

\section{Preliminaries}

In this section, we list basic technical definitions and facts which we use in our paper. 

This work concerns extensions of Peano Arithmetic ($\PA$). All basic facts concerning $\PA$ (including coding) and its models may be found, e.g. in \cite{kaye}. We assume that Peano Arithmetic is formulated in a language $\LPA$ with one unary function symbol $S(x)$ (the successor function) and two binary function symbols $+$ and $\times$. We assume that the reader is acquainted with arithmetisation of syntax. We will use the following notation:
\begin{itemize}
	\item $\Var(x)$ is a formula which defines the set of (G\"odel codes of) first order variables.
	\item $\Term_{\PA}(x)$ is a formula which defines the set of arithmetical terms. $\ClTerm(x)$ defines the set of closed arithmetical terms. $\TermSeq_{\PA}(x)$ defines sequences of arithmetical terms. $\ClTermSeq_{\PA}(x)$ defines sequences of closed arithmetical terms.
	\item $\Form_{\PA}(x)$ is a formula which defines the set of arithmetical formulae. $\Form^{\leq 1}_{\PA} (x)$ represents the set of arithmetical formulae with at most one free variable.
	\item $\Sent_{\PA}(x)$ is a formula which defines the set of arithmetical sentences.
	\item We will use expressions such as $x \in \Form_{\PA}, x \in \Sent_{\PA}$, or $x \in \Term_{\PA}$ interchangeably with $\Form_{\PA}(x), \Sent_{\PA}(x)$, or $\Term_{\PA}(x)$. In other words, our notation will conflate formulae defining certain sets and those sets themselves.  
	\item For $\phi \in \Form_{\PA}$, $\FV(\phi)$ is a formula defining the set of free variables of $\phi$ and $\Val (\phi)$ is a formula defining the set of valuations, i.e. finite functions, whose domains contain all free variables of $\phi$.   
	\item $y = \num{x}$ is a binary formula which defines the relation "$y$ is the numeral denoting $x$," i.e., the numeral $S\ldots S0$, where $S$ occurs $x$ times. We will actually use $\num{x}$ as if it were a term and write expressions such as $\forall x T\phi(\num{x})$ to denote: "for all $x$, $T$ holds of the effect of substituting $\num{x}$ for the only free variable in the formula $\phi$."
	\item $x_y = z$ is a ternary formula which defines the relation "$y$-th element of the sequence $x$ is $z$." We will actually use this relation in a functional way. For instance, we will use an expression $a_c$ for a sequence $a$, as if $a_c$ were a term.
	\item $\val{x}=y$ is a binary formula representing the relation: "$y$ is the value of the term $x$." E.g., $\PA \vdash \val{(\num{x} + S0)} = S(x)$. We will use $\val{x}$ as if it were a term. If $\bar{s} \in \ClTermSeq_{\PA}$, then by $\bar{\val{s}}$ we mean the sequence of values of terms in $s$. 
	\item If $\phi \in \Form_{\PA}$, then $\sd(\phi)$ denotes the syntactic depth of $\phi$, that is, the maximal number of quantifiers and connectives on a path in the syntactic tree of $\phi$.
\end{itemize}  

In the paper, we discuss models of a theory $\CT^-$ and related theories. $\CT^-$ is an axiomatisation of compositional truth predicate for arithmetical sentences. Its language is $\LPA$ together with a unary predicate $T(x)$ with the intended reading "$x$ is a (G\"odel code of a) true sentence." Its axioms are axioms of $\PA$ together with the following ones: 
\begin{enumerate}
	\item $\forall s,t \in \ClTerm_{\PA} \ \ T(s=t) \equiv (\val{s} = \val{t}).$
	\item $\forall \phi \in \Sent_{\PA}  \ \ T\neg \phi \equiv \neg T \phi.$
	\item $\forall \phi, \psi \in \Sent_{\PA} \ \ T \phi \vee \psi \equiv T \phi \vee T \psi.$
	\item $\forall v \in \Var \forall \phi \in \Form^{\leq 1}_{\PA} \ \ T \exists v \phi \equiv \exists x T \phi[\num{x}/v].$
	\item $\forall \bar{s},\bar{t} \in \ClTermSeq_{\PA} \forall \phi \in \Form_{\PA} \ \ \val{\bar{s}} = \val{\bar{t}} \rightarrow T\phi(\bar{t}) \equiv T \phi(\bar{s})$.
\end{enumerate}

The last item is called the \df{regularity axiom}. Although it is not essential to the present paper (all theorems still hold if we drop the axiom), we include it nevertheless, since truth theories without induction can display certain pathologies which add a layer of technical complexity to the considerations. For instance, in the absence of the regularity axiom, we cannot deduce that $T\exists v \phi(v)$ holds from the fact that $T \phi(0 +0)$ holds for a nonstandard $\phi$, since in the axiom for the existential quantifier we explicitly require that $\phi$ is witnessed by a numeral.

We will also consider some variants of $\CT^-$. 

\begin{definition} \label{def_restricted_ctminus}
Let $I(x)$ be a unary predicate which will play a role of a definition of a cut. By $\CT^- \res I$ we mean $\CT^-$ in which the compositional axioms are only assumed to hold for formulae whose depth is in this cut, but with no restriction on the size of terms, i.e.:

\begin{enumerate}
	\item $I(x)$ defines a cut.
	\item $\forall s,t \in \ClTerm_{\PA} \ \ T(s=t) \equiv (\val{s} = \val{t}).$
	\item $\forall \phi \in \Sent_{\PA}  \ \ \Big(\sd(\neg \phi) \in I \rightarrow T\neg \phi \equiv \neg T \phi \Big).$
	\item $\forall \phi, \psi \in \Sent_{\PA}  \ \ \Big(\sd(\phi \vee \psi) \in I \rightarrow T (\phi \vee \psi) \equiv T \phi \vee T \psi \Big).$
	\item $\forall v \in \Var \forall \phi \in \Form^{\leq 1}_{\PA} \ \ \Big(\sd(\exists v \phi) \in I \rightarrow T \exists v \phi \equiv \exists x T \phi[\num{x}/v] \Big).$
	\item $\forall \bar{s},\bar{t} \in \ClTermSeq_{\PA} \forall \phi \in \Form_{\PA} \ \ \val{\bar{s}} = \val{\bar{t}} \rightarrow T\phi(\bar{t}) \equiv T \phi(\bar{s})$.
\end{enumerate}

If $c \in M \models \PA$, we define $\CT^- \res c$ in an analogous way with a constant $c$ instead of $I(x)$ and with formulae and sentences restricted to $[0,c]$ instead of the cut $I$ (alternatively, we could view $c$ as a fresh constant). 
\end{definition}

Notice that in $\CT^-$ there are no induction axioms for the formulae containing the truth predicate (induction for arithmetical formulae is assumed, as $\CT^-$ and its variations are extensions of $\PA$). If we extend our theories with full induction, we denote them with $\CT$ or $\CT \res c$.

When dealing with truth predicates restricted to certain syntactic depth, it proves handy to introduce an additional technical regularity condition. 

\begin{definition} \label{def_structural_equivalence}
	Let $\phi, \psi \in \Sent_{\PA}$. We say that $\phi, \psi$ are \df{structurally equivalent} if there exists a formula $\eta \in \Form_{\PA}$ and sequences of closed terms $\bar{s}, \bar{t} \in \ClTermSeq_{\PA}$ such that:
	\begin{itemize}
		\item 	 $\bar{\val{s}} = \bar{\val{t}}$.
		\item   $\eta(\bar{s})$ differs from $\phi$ by renaming bound variables in such a way that distinct variables remain distinct.
		\item   $\eta(\bar{t})$ differs from $\psi$ by renaming bound variables in such a way that distinct variables remain distinct.
	\end{itemize}
If $\phi$ and $\psi$ are structurally equivalent, we denote it by $\phi \simeq \psi$. 
\end{definition}

\begin{example}
	The following two sentences are structurally equivalent:
	\begin{eqnarray*}
		\phi_1 & = & \exists x \big( x +0 =  S(S0) \big) \\
		\phi_2 & = & \exists y \big( y + (0 \times S0) = S0 + (S0 \times S0) \big).
	\end{eqnarray*}
\end{example}

By convention, we will also use the expression $\phi \simeq \psi$ to denote the formalised arithmetised statement that $\phi$ and $\psi$ are structurally equivalent. Finally, we define the desired regularity property.
\begin{definition} \label{def_structural regularity property}
	By \df{strucutral regularity property} ($\SRP$), we mean the following axiom:
	\begin{displaymath}
	\forall \phi, \psi \in \Sent_{\PA} \Big( \phi \simeq \psi \rightarrow T\phi \equiv T \psi \Big).
	\end{displaymath}
\end{definition}

\section{Introducing disjunctions with stopping conditions}

In this section, we describe the main tool of our paper. The technique of disjunctions with stopping conditions involves a
propositional construction which essentially allows us to express infinite definitions by cases under the truth predicate. They have been first explicitly defined in \cite{cieslinski_lelyk_wcislo}, but in fact they were used much earlier by Smith in \cite{smith}. The idea on which they are based can be traced back to \cite{lachlan}. Since the construction of disjunctions with stopping condition is rather intricate, let us begin with an intuitive description of how they work.

Let $(M,T)$ be a model of $\CT^-$ and let $p = (\phi_i)_{i\in\omega}$ be a computable type in one variable and with finitely many parameters in the arithmetical language that is finitely realisable in $M$. We will try to show that this type is realised in $M$. One obvious strategy would be as follows. Let $(\phi_i)_{i<c}$ be a nonstandardly long coded sequence in $M$ which prolongs $p$. For $a<c$, let 
\begin{displaymath}
\beta_a(x) : = \bigwedge_{i\leq a} \phi_i(x).
\end{displaymath}
Notice that since $p$ is a type, for any standard $j$ we have:
\begin{displaymath}
(M,T) \models \exists x \ \ T\beta_j(\num{x}).
\end{displaymath}
The goal is to show that for some nonstandard $b \in M$, 
\begin{displaymath}
(M,T) \models \exists x \ \ T\beta_b(\num{x}).
\end{displaymath}
Then, using compositional axioms, we could show that any such $b$ realises $p$. Unfortunately, it is not really clear, how to ensure the existence of such $b$ in total absence of  induction for the truth predicate (otherwise, we could use an easy overspill argument).

In essence, we would like to define the set of elements satisfying a given type  using a nonstandard formula. Now, an extremely clever observation by Lachlan which is one of the central ingredients of his proof is that we do not have to use induction to obtain a formula which defines the set of elements realising a certain type. Let us describe this in more detail.

For a fixed type $p$, we introduce a notion of rank. The rank $r$ of a formula $\psi \in \Form(M)$  measures how close the elements $x$ satisfying $(M,T) \models T \psi(\num{x})$ come to  satisfying the type $p$. This can be defined as follows: if $\psi$ is not satisfied by any element or  $(M,T) \models \exists x \ \psi(x) \wedge \neg \phi_0(x)$, this is very bad and we set $r(\psi) = - \infty$. If there are elements such that $(M,T) \models T\psi(\num{x})$ and any such $x$ happens also to satisfy $\phi_0(x), \ldots, \phi_{n}(x)$, but not $\phi_{n+1}(x)$, we set rank $r(\psi) = n$. If any element defined by $\psi$ realises the whole type, then we set $r(\psi) = \infty$.  Notice that  the formulae $\beta_n(x)$ defined above  have rank at least $n$.

Now our task may be reformulated as follows: find a formula whose rank is  $\infty$. It turns out that this may be obtained without using induction thanks to the following  lemma that is implicit in the work of Lachlan.

\begin{lemma}\label{lem_dobre_porzadki_w_M}
	Let $W$ be a well order with a maximal element, let $M \models \PA$ be a nonstandard model and let $f: M \to W$ be a function such that for any $x \in M$:
	\begin{itemize}
		\item either $f(x)$ is the maximal element of $W$;
		\item or $f(x+1) > f(x)$.
	\end{itemize}
	Then there exists $x \in M$ such that $f(x)$ is the maximal element of $W$. 
\end{lemma}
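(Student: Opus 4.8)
The plan is to prove the statement by contradiction, converting the failure of the conclusion into an infinite strictly descending sequence inside $W$ and thereby contradicting well-foundedness.

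First I would assume that no $x \in M$ satisfies ``$r(x)$ is the maximal element of $W$''. Then for every $x \in M$ the first clause of the hypothesis fails, so the second one must hold, i.e.\ $r(x+1) > r(x)$ for all $x \in M$. Next I would invoke the hypothesis that $M$ is nonstandard to fix an element $a \in M$ lying above every standard natural number. For each standard $n$ we then have $a > n$, so $a - n$ is a genuine element of $M$, and instantiating the inequality $r(x+1) > r(x)$ at the point $x = a - (n+1)$ gives $r(a-n) > r(a-(n+1))$. Hence the externally indexed sequence $\big(r(a-n)\big)_{n\in\omega}$ is strictly descending in $W$; equivalently, $\set{r(a-n)}{n\in\omega}$ is a nonempty subset of $W$ with no least element, contradicting that $W$ is a well order. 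Since this sequence is given by an explicit external definition ($n \mapsto a - n$, followed by $r$), no appeal to choice is needed to extract it.

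I do not expect a real obstacle here; the only points that need care are that the hypothesis is used at each point $x = a-(n+1)$, and — more importantly — that nonstandardness of $M$ is exactly what is being exploited when we pick $a$. (Over the standard model the statement fails: take $W = \omega+1$ with maximal element $\omega$ and $r = \mathrm{id}\colon \mathbb{N} \to \omega \subseteq W$; then $r(x+1) > r(x)$ always, yet $r$ never attains $\omega$.) It is also worth noting that $W$ needs no closure properties beyond well-foundedness of its strict order, together with the presence of a top element so that the conclusion is meaningful.
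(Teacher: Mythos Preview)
Your proof is correct and follows essentially the same route as the paper: assume no $x$ attains the maximum, pick a nonstandard $a \in M$, and observe that $r(a) > r(a-1) > r(a-2) > \cdots$ is an infinite descending chain in $W$, contradicting well-foundedness. The additional remarks you give (the counterexample over the standard model and the observation that no choice is needed) are accurate but go beyond what the paper records.
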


\begin{proof}
	Let $W,M,f$ satisfy the assumptions of the theorem. Suppose that there is no $x \in M$ such that $f(x)$ is maximal in $W$. Pick any nonstandard $a \in M$. Then 	
	\begin{displaymath}
	f(a) > f(a-1) > f(a-2) > \ldots 
	\end{displaymath} 
	is an infinite descending $\omega$-chain in  $W$. Contradiction.
\end{proof}

Now we will describe a na\"ive attempt to use Lemma \ref{lem_dobre_porzadki_w_M}, applied to the order $\{-\infty\} \cup \omega \cup \{\infty\} $, to find an element realising $p$.  To this end, for a given formula $\psi$, we will define in a uniform way another formula of a higher rank. 

It is easy to see that for any formula $\psi(x)$, there is a sentence $\alpha_n[\psi]$ which expresses that $\psi$ has rank $n-1$ for $n>0$ or rank $-\infty$ for $n=0$  (the details are in the proof of Lemma \ref{lem_lemat_o_randze_dla p_rangi} in the next section).

Let $\gamma_0$ be $x=x$. Then, given  $\gamma_a$ we define $\gamma_{a+1}$ as follows:
\begin{displaymath}
\gamma_{a+1} := \big(\alpha_0[\gamma_a] \wedge \beta_0(x) \big) \vee \big(\alpha_1[\gamma_a] \wedge \beta_1(x)\big) \vee  \ldots \vee \big(\alpha_c[\gamma_a] \wedge \beta_c(x)\big)
\end{displaymath}
with parentheses grouped to the left. 

Read $\gamma_a$ as a definition by cases: either $\gamma_a$ has rank smaller than $0$, i.e. $-\infty $, and $\beta_0(x)$ or $\gamma_a$ has rank $0$ and $\beta_1(x)$, or $\gamma_{a}$ has rank $1$ and $\beta_2(x)$ etc. Na\"ively, for any $a$ the formula $\gamma_{a+1}$ should have higher rank than $\gamma_{a}$. Namely, if $\gamma_{a}$ has rank $n$, then the only formula $\alpha_j[\gamma_{a}]$ which can be true is $\alpha_{n+1}[\gamma_{a}]$. Then the whole formula $\gamma_{a+1}$ is equivalent over propositional logic to $\beta_{n+1}(x)$ which has rank at least $n+1$ (the case where the rank of $\gamma_{a}$ is equal to $-\infty$ is handled in a similar fashion). This, coupled with Lemma \ref{lem_dobre_porzadki_w_M} would ensure the existence of a formula with rank $\infty$.

  Unfortunately, this definition does not work correctly. This is because infinite conjunctions and disjunctions may behave badly in general models of $\CT^-$. Even if $\gamma_a$ indeed has rank $n$,  $\gamma_{a+1}$ may still define the whole model $M$. Consequently, its rank can be even $- \infty$ if $\phi_0$ defines any nontrivial subset of the model. Even if we fix an $x$ such that $\neg \phi_0(x)$ holds, we still we might have:
\begin{displaymath}
(M,T) \models T \big(\alpha_0[\gamma_a] \wedge \beta_0(\num{x}) \big) \vee \big(\alpha_1[\gamma_a] \wedge \beta_1(\num{x})\big) \vee  \ldots \vee \big(\alpha_c[\gamma_a] \vee \beta_c(\num{x})\big)
\end{displaymath}  
Our truth predicate will be able to recognise:
\begin{displaymath}
(M,T) \models \neg T \alpha_{0}[\gamma_a]
\end{displaymath}
and consequently it will yield the first disjunct false. In a similar fashion, it can yield the second disjunct false, the third disjunct false etc. However, it will not be able to conclude that the whole disjunction is false. 

We can in fact guarantee that in a typical model of $\CT^-$, nonstandardly large disjunctions will produce this kind of pathological behaviour.  In \cite{enayat_pachomow}, it is shown that $\CT^-$ enriched with the principle: "a finite disjunction is true iff one of the disjuncts is true" is not conservative over $\PA$ and in fact has the same arithmetical strength as $\CT_0$, a compositional truth theory $\CT^-$ with $\Delta_0$ induction for the formulae in the extended language.\footnote{As shown in \cite{lelykphd}, preceded by a closely related result in \cite{kotlarski}, the arithmetical strength of this theory can be characterised as $\omega$ iterations of the uniform arithmetical reflection over $\PA$.}

Now, a disjunction with stopping condition is a propositional construction which allows us to do exactly what we have failed to do in our  na\"ive attempt above. In other words, we can define a nonstandard arithmetical formula $\gamma_{a+1}(x)$ such that if $k \in \omega$ is the least number for which $(M,T) \models \alpha_{k+1}[\gamma_a]$ (that is, $\gamma_a$ has rank $k+1$), then
 \begin{displaymath}
(M,T) \models \forall x \ \ \big(T\gamma_{a+1}(\num{x}) \equiv \beta_{k+1}(x) \big).
\end{displaymath}

The definition of  such $\gamma_{a+1}(x)$ which will be given in the proof of Lachlan's theorem in the next section uses a particular instance of a disjunction with  with a stopping condition as defined  below.  Roughly, to check if  $\gamma_{a+1}(x)$ holds, we ask if the rank of $\gamma_{a}(x)$ is below 0, if yes, we check if $\beta_0(x)$ holds. If yes, our job is done, if not we ask if the rank of $\gamma_{a}(x)$ is below 1, and  if yes, we check if $\beta_1(x)$ holds. If yes, we stop, otherwise we continue. If we get to $\beta_c(x)$ without stopping, we declare that $\gamma_{a+1}(x)$ does not hold. \begin{definition}
	Let $c \in M$, and let $(\alpha_i)_{i\leq c}$, $(\beta_i)_{i\leq c}$ be coded sequences of sentences of $M$. Then we define a \df{disjunction with stopping condition} $\alpha$ 
	\begin{displaymath}
		\bigvee_{i=a}^{c, \alpha} \beta_i
	\end{displaymath} 
	by backwards induction on $k$.
	\begin{itemize}
		\item $\bigvee_{i=c}^{c, \alpha} \beta_i = (\alpha_c \wedge \beta_c)$.
		\item $\bigvee_{i=a}^{c,\alpha} \beta_i = (\alpha_a \rightarrow \beta_a) \wedge [(\alpha_a \wedge \beta_a)  \vee (\neg \alpha_a \wedge \bigvee_{i=a+1}^{c, \alpha} \beta_i )]$.
	\end{itemize}
\end{definition}

Now, we can spell out the main property of disjunctions with stopping conditions.
\begin{theorem} \label{tw_alternatywy_z_warunkiem_stopu}
	Let $(M,T) \models \CT^-$ and let $(\alpha_i)_{i\leq c}, (\beta_i)_{i\leq c}$ be any coded sequences of sentences of $M$. Suppose that the least $k_0$ such that $(M,T) \models T \alpha_{k_0}$, is standard. Then 
	\begin{displaymath}
	(M,T) \models T \bigvee_{i=0}^{c ,\alpha}  \beta_i \equiv T \beta_{k_0}.
	\end{displaymath} 
\end{theorem}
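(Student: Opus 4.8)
The plan is to prove the equivalence by backwards induction on the index $a$, establishing a slightly stronger statement: for every $a \le c$,
\begin{displaymath}
(M,T) \models T \bigvee_{i=a}^{c,\alpha} \beta_i \equiv \Big( k_0 \ge a \ \wedge\ T\beta_{k_0} \Big) \ \vee\ \Big( k_0 < a \ \wedge\ (\text{something false}) \Big),
\end{displaymath}
or, more cleanly, to split into the two cases "$k_0 < a$" and "$a \le k_0$" and track what the truth predicate says about $\bigvee_{i=a}^{c,\alpha}\beta_i$ in each. Here $k_0$ is by hypothesis standard, so I can freely run an external induction on the finitely many values $a = k_0, k_0 - 1, \dots, 0$ (and, for the case $k_0 < a$, a backwards induction from $a = c$ down to $a = k_0 + 1$, which may be nonstandard in length but still works because at each step I only peel off one disjunct using the compositional axioms, and a standard number of such steps suffices once I am below $k_0$).

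First I would handle the case $a > k_0$, i.e. $\alpha_a$ is false under $T$ for the relevant small indices — more precisely, all of $\alpha_{k_0+1}, \dots$ need not be false, but the point is that I only care about the formula $\bigvee_{i=a}^{c,\alpha}\beta_i$ once $a$ has dropped to $k_0$. So the base of the real argument is $a = k_0$: by the recursive clause, $\bigvee_{i=k_0}^{c,\alpha}\beta_i = (\alpha_{k_0} \to \beta_{k_0}) \wedge [(\alpha_{k_0} \wedge \beta_{k_0}) \vee (\neg\alpha_{k_0} \wedge \bigvee_{i=k_0+1}^{c,\alpha}\beta_i)]$ (or the single-conjunct clause if $k_0 = c$). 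Since $(M,T) \models T\alpha_{k_0}$, the compositional axioms for $\neg, \vee, \wedge$ (derived from axioms 2 and 3, using that $\wedge$ and $\to$ abbreviate combinations of $\neg$ and $\vee$, and that these are all sentences of $M$ so the axioms apply) give that $T\neg\alpha_{k_0}$ fails, hence the disjunct $\neg\alpha_{k_0} \wedge \bigvee_{i=k_0+1}^{c,\alpha}\beta_i$ is not true, and the whole expression reduces under $T$ to $T\beta_{k_0}$. This is the crucial computation and it is essentially a finite unwinding of the compositional clauses; I expect no obstacle here beyond bookkeeping, since $k_0$ is standard.

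Next I would do the backwards induction step for $a < k_0$: assuming $(M,T) \models T\bigvee_{i=a+1}^{c,\alpha}\beta_i \equiv T\beta_{k_0}$, I want the same for $a$. By the recursive clause, $\bigvee_{i=a}^{c,\alpha}\beta_i = (\alpha_a \to \beta_a) \wedge [(\alpha_a \wedge \beta_a) \vee (\neg\alpha_a \wedge \bigvee_{i=a+1}^{c,\alpha}\beta_i)]$. Now $a < k_0$ means $\alpha_a$ is \emph{not} true under $T$ (by minimality of $k_0$), so $\alpha_a \to \beta_a$ is true under $T$, $\alpha_a \wedge \beta_a$ is not, and $\neg\alpha_a$ is true; the compositional axioms then collapse the expression under $T$ to $T\bigvee_{i=a+1}^{c,\alpha}\beta_i$, which by the induction hypothesis is $T\beta_{k_0}$. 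Iterating down to $a = 0$ — a standard number of steps, since $k_0$ is standard — yields $(M,T) \models T\bigvee_{i=0}^{c,\alpha}\beta_i \equiv T\beta_{k_0}$, as desired.

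The main subtlety to be careful about is that all the subformulas $\bigvee_{i=a}^{c,\alpha}\beta_i$, $\alpha_a \wedge \beta_a$, etc., are genuine elements of $M$ coded as syntactic objects (this is why we defined the construction by an internal backwards recursion in $M$), so that the compositional axioms of $\CT^-$ — which quantify over all sentences $\phi, \psi \in \Sent_{\PA}$ of $M$ — are legitimately applicable to them even though they are nonstandard; there is no hidden appeal to induction on the truth predicate. The only place standardness of $k_0$ is essential is in guaranteeing that the descent from $\bigvee_{i=0}^{c,\alpha}$ to $\bigvee_{i=k_0}^{c,\alpha}$ takes finitely many (externally) steps, each a single application of the compositional axioms; without that, an overspill-type argument would be needed and the statement could fail.
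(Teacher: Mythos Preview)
Your proposal is correct and follows essentially the same approach as the paper: a base case at $a=k_0$ using $T\alpha_{k_0}$ to collapse the disjunction to $T\beta_{k_0}$, followed by an external backwards induction on $a<k_0$ (finitely many steps since $k_0$ is standard) using $\neg T\alpha_a$ to collapse each step to the next. The initial discussion of the case $a>k_0$ is unnecessary---as you yourself note, the argument never needs to examine $\bigvee_{i=a}^{c,\alpha}\beta_i$ for $a>k_0$---but the core argument matches the paper's proof exactly.
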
 

\begin{proof}
	We first show that 
	\begin{displaymath}
	(M,T) \models T \bigvee_{i=k_0}^{c ,\alpha}  \beta_i \equiv T \beta_{k_0}.
	\end{displaymath}
	Suppose that $(M,T) \models T \alpha_{k_0}$. Then by elementary propositional logic for any $\gamma$:
	\begin{displaymath}
	(M,T) \models (T\alpha_{k_0} \rightarrow T\beta_{k_0}) \wedge \Big((T\alpha_{k_0} \wedge T\beta_{k_0})  \vee (\neg T\alpha_{k_0} \wedge T\gamma) \Big)
	\end{displaymath}
	is equivalent to 
	\begin{displaymath}
	(M,T) \models T \beta_{k_0}.
	\end{displaymath}
	
	Then we prove by backwards (external) induction on $k$ that for any $k \leq k_0$ , 
	\begin{displaymath}
	(M,T) \models T \bigvee_{i=k}^{c ,\alpha} T \beta_i \equiv T \beta_{k_0}.
	\end{displaymath} 
	Suppose that this equivalence has already been proved for $k+1$. Then, since $k < k_0$ and we assumed that $k_0$ is minimal such that $(M,T) \models \alpha_{k_0}$, we know that $T \alpha_k$ does not hold and we have for an arbitrary $\gamma$:
	\begin{displaymath}
	(M,T) \models \Big[ (T\alpha_k \rightarrow  T\beta_k) \wedge \Big((T\alpha_k \wedge T\beta_k) \vee (\neg T\alpha_k \wedge T\gamma) \Big) \Big]\equiv T \gamma.
	\end{displaymath} 
	So, by induction hypothesis:
	\begin{displaymath}
	(M,T) \models T \bigvee_{i=k}^{c,\alpha} \beta_i \equiv T \bigvee_{i=k+1}^{c,\alpha} \beta_i \equiv T \beta_{k_0}.
	\end{displaymath}
	Which concludes the proof of the induction step.
\end{proof}

\section{Lachlan's Theorem}
In this section, we present a proof of Lachlan's theorem. We hope that our argument, although very similar to the original one, will be seen as less mysterious.

\begin{theorem}[Lachlan's Theorem] \label{tw_lachlana}
	Let $(M,T) \models \CT^-$. Then $M$ is recursively saturated. 
\end{theorem}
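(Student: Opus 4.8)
The plan is to show that every recursive type $p(x) = (\phi_i(x))_{i\in\omega}$ over $M$ (with finitely many parameters, which we suppress) that is finitely realised in $M$ is actually realised in $M$. Since $p$ is recursive, the function $i\mapsto \phi_i$ is representable in $\PA$, so inside $M$ one may fix a nonstandard $c$ together with a coded sequence $(\phi_i)_{i\leq c}$ prolonging $p$. Put $\beta_a(x) := \bigwedge_{i\leq a}\phi_i(x)$, grouped to the left. For standard $n$ the formula $\beta_n$ is standard, so by finite realisability and the compositional axioms $(M,T)\models \exists x\, T\beta_n(\num{x})$, and $(M,T)\models T\beta_n(\num{m})$ holds iff $M\models\beta_n(m)$, for every $m\in M$.

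Next I would introduce, in the metatheory, a rank measuring agreement with $p$. Call $\psi(x)\in\Form^{\leq 1}_{\PA}(M)$ \emph{$T$-nonempty} if $(M,T)\models\exists x\, T\psi(\num{x})$, and for such $\psi$ set $r(\psi) = \infty$ if every $m$ with $(M,T)\models T\psi(\num{m})$ realises $p$, and otherwise let $r(\psi)$ be the largest standard $n$ such that every such $m$ satisfies $\beta_n$. The construction below will keep us in the range where this is well defined, with values in $W := \omega\cup\{\infty\}$, a well order with maximal element $\infty$. Note that if $p$ is \emph{not} realised in $M$, then no $T$-nonempty formula has rank $\infty$.

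The heart of the argument is to pass from a formula to one of strictly larger rank via a disjunction with stopping condition. For $\psi\in\Form^{\leq 1}_{\PA}(M)$ let $\alpha_i[\psi] := \exists x\,\big(\psi(x)\wedge\neg\beta_i(x)\big)$; unwinding the compositional axioms for $\exists$, $\wedge$ and $\neg$ shows that $(M,T)\models T\alpha_i[\psi]$ iff some $m$ satisfies $(M,T)\models T\psi(\num{m})\wedge\neg T\beta_i(\num{m})$, i.e.\ iff $\psi$ ``has rank less than $i$''. Hence, if $\psi$ is $T$-nonempty of finite rank $n$, then the least $k_0$ with $(M,T)\models T\alpha_{k_0}[\psi]$ equals $n+1$, which is standard (here one uses that $\beta_0,\dots,\beta_{n+1}$ are standard, so $T$ computes them correctly). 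Now define, by arithmetical recursion inside $M$, a parameter-definable function $a\mapsto\gamma_a\in\Form^{\leq 1}_{\PA}(M)$ by $\gamma_0(x):=\phi_0(x)$ and $\gamma_{a+1}(x):=\bigvee_{i=0}^{c,\,\alpha[\gamma_a]}\beta_i(x)$, the disjunction with stopping condition of $(\beta_i(x))_{i\leq c}$ against the sentences $(\alpha_i[\gamma_a])_{i\leq c}$ (this is a purely propositional construction, so the free variable $x$ is merely carried along, and substituting a numeral for $x$ commutes with it). If $\gamma_a$ is $T$-nonempty of finite rank $n$, then for every $m\in M$ Theorem \ref{tw_alternatywy_z_warunkiem_stopu}, applied to the coded sequences $(\alpha_i[\gamma_a])_{i\leq c}$ and $(\beta_i(\num{m}))_{i\leq c}$, yields $(M,T)\models T\gamma_{a+1}(\num{m})\equiv T\beta_{n+1}(\num{m})$; since $\beta_{n+1}$ is standard and $T$-nonempty this shows $\gamma_{a+1}$ is $T$-nonempty and $r(\gamma_{a+1})\geq n+1 > r(\gamma_a)$.

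Finally I would invoke Lemma \ref{lem_dobre_porzadki_w_M}. Suppose $p$ is not realised in $M$. Then no $\gamma_a$ has rank $\infty$, so by induction on $a$ — the base case being $\gamma_0=\phi_0$, which is $T$-nonempty of rank $\geq 0$, and the inductive step being the previous paragraph — every $\gamma_a$ is $T$-nonempty of finite rank and $r(\gamma_{a+1}) > r(\gamma_a)$ for all $a$. Thus $a\mapsto r(\gamma_a)$ is a function $M\to W$ which is everywhere below the maximum of $W$ yet strictly increases from $a$ to $a+1$, contradicting Lemma \ref{lem_dobre_porzadki_w_M}. Hence $p$ is realised, and since $p$ was arbitrary, $M$ is recursively saturated. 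The step I expect to be the main obstacle is getting the stopping conditions $\alpha_i[\gamma_a]$ exactly right, so that the hypothesis of Theorem \ref{tw_alternatywy_z_warunkiem_stopu} (least $k_0$ standard) holds precisely when $\gamma_a$ has finite rank and so that its conclusion pins $T\gamma_{a+1}$ down to $T\beta_{n+1}$; the accompanying nuisance — that the recursion could in principle produce a $T$-empty $\gamma_{a+1}$ once rank $\infty$ appears — is sidestepped by the ``suppose not realised'' framing, which forbids rank $\infty$ along the sequence in the first place.
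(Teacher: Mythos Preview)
Your argument follows the paper's strategy closely --- rank, disjunctions with stopping condition, and Lemma~\ref{lem_dobre_porzadki_w_M} --- but there is a genuine gap in the final paragraph. You establish ``$\gamma_a$ is $T$-nonempty of finite rank'' by \emph{induction on $a$}; since this property involves the truth predicate and the metatheoretic notion of finiteness, that induction is necessarily external and therefore only secures the conclusion for \emph{standard} $a$. But the proof of Lemma~\ref{lem_dobre_porzadki_w_M} requires picking a \emph{nonstandard} $a$ and running the descending chain $r(\gamma_a)>r(\gamma_{a-1})>\cdots$, so you need $r(\gamma_a)$ to be defined at nonstandard indices, which your external induction does not give. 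Concretely: nothing rules out that $\gamma_a$ is $T$-empty for some nonstandard $a$, and in that case none of your stopping sentences $\alpha_i[\gamma_a]=\exists x(\gamma_a(x)\wedge\neg\beta_i(x))$ is $T$-true for standard $i$, so Theorem~\ref{tw_alternatywy_z_warunkiem_stopu} does not apply and you lose all control of $\gamma_{a+1}$. Your closing remark misdiagnoses this: $T$-emptiness can appear at a nonstandard stage without rank $\infty$ ever occurring at a standard stage, so the ``suppose not realised'' framing does not sidestep it.

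The paper avoids this precisely by making the rank total: it adjoins a value $-\infty$ for $T$-empty formulae and inserts the extra stopping sentence $\alpha_0[\psi]:=\neg\exists x\,\psi(x)$. Then for \emph{every} $a$ (no induction on $a$ at all) one argues by a direct case split: if $r_p(\gamma_a)\neq\infty$ then it equals $-\infty$ or some $n\in\omega$, in either case some $\alpha_k[\gamma_a]$ with standard $k$ is $T$-true, Theorem~\ref{tw_alternatywy_z_warunkiem_stopu} fires, and $r_p(\gamma_{a+1})>r_p(\gamma_a)$. This pointwise argument is what makes Lemma~\ref{lem_dobre_porzadki_w_M} applicable. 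Your write-up is easily repaired along the same lines --- extend $W$ by a bottom element and replace your $\alpha_0[\psi]$ by $\neg\exists x\,\psi(x)\,\vee\,\exists x(\psi(x)\wedge\neg\beta_0(x))$ --- after which the rest goes through.
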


Let us describe the strategy of the proof. For a given coded and finitely satisfiable sequence of formulae $p = (\phi_i)_{i \in \omega}$, we will find a (nonstandard) formula $\gamma \in M$ such that 
\begin{itemize}
	\item $(M,T) \models \exists x T \gamma (\num{x})$;
	\item for all $i \in \omega$, $(M,T) \models \forall x \big(T\gamma(x) \rightarrow T \phi_i(x)\big).$
\end{itemize}

In other words, we will try to find a set of elements satisfying our type $p$ that is defined by a  nonstandard formula $\gamma$. In order to find $\gamma$, we will introduce a suitable notion of rank.

\begin{definition} \label{edf_p_rank}
	Let $(M,T) \models \CT^-$ and let $p = (\phi_i)_{i\in \omega}$ be any coded sequence of (possibly nonstandard) formulae. We define a $p$-\df{rank} of formulae $\phi \in \Form^{\leq 1}(M)$, $r_p(\phi)$ as follows:
	\begin{displaymath}
	r_p(\phi) = \left\{ \begin{array}{ll}
	- \infty, & \textnormal{if } (M,T) \models \neg \exists x \  T \phi(\num{x}) \vee \exists x \ \big( T\phi(\num{x}) \wedge \neg T \phi_0(\num{x}) \big); \\
	n, &  \textnormal{if } (M,T) \models \exists x \ T\phi(\num{x}) \textnormal{ and } n \in \omega \textnormal{ is the greatest such that } \\
	& (M,T) \models \forall x \ \big(T \phi(\num{x}) \rightarrow T \phi_i(\num{x})\big), \textnormal{for } i \leq n; \\
	\infty, & \textnormal{if }  (M,T) \models \exists x \ T \phi(\num{x}) \textnormal{ and } \\
	& \textnormal{for all $i\in\omega$, $(M,T) \models \forall x \big(T \phi(\num{x}) \rightarrow T \phi_i(\num{x})\big)$}.   
	\end{array} \right.
	\end{displaymath}
\end{definition}

We can say that $p$-rank of a formula measures  how close that formula gets to defining a set of elements satisfying the type $p$. Now, in order to prove Lachlan's theorem we will find a sequence $(\gamma_i)_{i < c}$ of formulae with $c$  nonstandard such that whenever $r_p(\gamma_a) \neq \infty$, then $r_p(\gamma_{a+1}) > r_p(\gamma_a).$ Then the theorem follows by a straightforward application of Lemma \ref{lem_dobre_porzadki_w_M} for $f(x) = r_p(\gamma_x)$.

\begin{lemma}[Rank Lemma] \label{lem_lemat_o_randze_dla p_rangi}
	Let $(M,T) \models \CT^-$. Then there exists a coded sequence of formulae $(\gamma_i)_{i<c}$ of nonstandard length such that for all $a <c $ either $r_p(\gamma_a) = \infty$ or
	\begin{displaymath}
	r_p(\gamma_{a+1}) > r_p(\gamma_a).
	\end{displaymath}
\end{lemma}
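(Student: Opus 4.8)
The plan is to construct the sequence $(\gamma_i)_{i<c}$ explicitly using a disjunction with stopping condition, following the intuitive description given before the definition. First I would carefully verify the claim stated in the paragraph preceding the lemma: that for any formula $\psi(x) \in \Form^{\leq 1}(M)$ and any standard (indeed any $n \in M$) there is a sentence $\alpha_n[\psi]$, obtained arithmetically from $\psi$ and $n$, which expresses that $\psi$ has $p$-rank less than $n$. Concretely, $\alpha_n[\psi]$ should be (a formalisation of) the sentence
\begin{displaymath}
\neg \exists x \, \psi(\num{x}) \ \vee \ \bigvee_{i<n} \exists x \, \big(\psi(\num{x}) \wedge \neg \phi_i(\num{x})\big),
\end{displaymath}
where the inner disjunction is a genuine coded (possibly nonstandard when $n$ is nonstandard) object of $M$; the point is that $(M,T) \models T\alpha_n[\psi]$ holds precisely when $r_p(\psi) < n$, which follows by unwinding the compositional axioms (disjunction, negation, existential quantifier, and the equality/regularity axioms to handle the numeral substitutions). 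Here I must be slightly careful that the disjunction $\bigvee_{i<n} \exists x(\psi(\num{x}) \wedge \neg \phi_i(\num{x}))$ is a \emph{standard-length} disjunction when $n$ is standard, so that its truth value is governed termwise by the compositional axioms; for nonstandard $n$ I will only ever apply this where the relevant witness index is standard, exactly as in Theorem~\ref{tw_alternatywy_z_warunkiem_stopu}.

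Next I would set up the $\beta_i$. Fix a coded sequence $(\phi_i)_{i \le c}$ of nonstandard length $c$ prolonging $p$ (using that $p$ is computable, hence coded, so it extends to a coded sequence by overspill on an arithmetical formula), and put $\beta_i(x) := \bigwedge_{j \le i} \phi_j(x)$, a coded sequence of formulae in $M$. I then define $\gamma_0(x) := (x = x)$ and, given $\gamma_a$, set
\begin{displaymath}
\gamma_{a+1}(x) := \bigvee_{i=0}^{c,\alpha}\big(\alpha_i[\gamma_a] \wedge \beta_i(x)\big),
\end{displaymath}
i.e., the disjunction with stopping condition built from the sequence of sentences $\big(\alpha_i[\gamma_a]\big)_{i\le c}$ playing the role of the $\alpha_i$ in the Definition, and the sequence of \emph{formulae} $\big(\alpha_i[\gamma_a] \wedge \beta_i(x)\big)_{i\le c}$ playing the role of the $\beta_i$ (note these have a free variable $x$, but the construction of the disjunction with stopping condition is purely propositional and goes through verbatim, treating $x$ as a parameter — or formally, working with $\phi(\num{x})$ for each fixed $x$). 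By the recursion theorem in $\PA$ the map $a \mapsto \gamma_a$ is coded, giving the desired coded sequence of nonstandard length.

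The heart of the argument is then to show $r_p(\gamma_{a+1}) > r_p(\gamma_a)$ whenever $r_p(\gamma_a) \ne \infty$. Let $k := r_p(\gamma_a) + 1$ if $r_p(\gamma_a) \ge 0$, and $k := 0$ if $r_p(\gamma_a) = -\infty$; in either case $k$ is the least index for which $(M,T) \models T\alpha_k[\gamma_a]$, and $k$ is \emph{standard} because $r_p(\gamma_a) \ne \infty$. So Theorem~\ref{tw_alternatywy_z_warunkiem_stopu} applies and yields
\begin{displaymath}
(M,T) \models \forall x \ \Big(T\gamma_{a+1}(\num{x}) \equiv T\big(\alpha_k[\gamma_a] \wedge \beta_k(\num{x})\big)\Big),
\end{displaymath}
and since $(M,T)\models T\alpha_k[\gamma_a]$ and $k$ is standard, the compositional conjunction axiom gives $T\gamma_{a+1}(\num{x}) \equiv T\beta_k(\num{x})$ for all $x$, hence (standard conjunction again, as $k$ is standard) $T\gamma_{a+1}(\num{x}) \equiv \bigwedge_{j\le k} T\phi_j(\num{x})$. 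It remains to check two things: that $(M,T)\models \exists x\, T\gamma_{a+1}(\num{x})$, which holds because $\beta_k$ has $p$-rank $\ge k > r_p(\gamma_a) \ge$ (in particular its rank is not $-\infty$, using finite realisability of $p$ together with the fact that $k$ is standard), so $r_p(\gamma_{a+1}) \ge k$; and that actually $r_p(\gamma_{a+1}) \ge k > r_p(\gamma_a)$, which is immediate from $T\gamma_{a+1}(\num{x}) \rightarrow T\phi_j(\num{x})$ for all $j < k$. In the degenerate case $r_p(\gamma_a) = -\infty$ one has $k=0$, $\beta_0 = \phi_0$, and $r_p(\gamma_{a+1}) \ge 0 > -\infty$.

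The main obstacle I anticipate is bookkeeping around standard versus nonstandard indices: Theorem~\ref{tw_alternatywy_z_warunkiem_stopu} only gives the clean collapse of the disjunction with stopping condition when the least $k$ with $T\alpha_k$ is standard, and this is exactly what the hypothesis $r_p(\gamma_a) \ne \infty$ buys us — but one has to make sure that $r_p(\gamma_a) \ne \infty$ genuinely forces $k_0 := \min\{i : (M,T)\models T\alpha_i[\gamma_a]\}$ to be standard and finite. That in turn requires the verification that $(M,T)\models T\alpha_i[\gamma_a]$ iff $r_p(\gamma_a) < i$ is correct even when we only know it for standard $i$ (which is all we need), and that $r_p(\gamma_a) \ne \infty$ means some standard $i$ works. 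The other mild subtlety is ensuring $\gamma_{a+1}$ really is a well-formed arithmetical formula with exactly the free variable $x$ and that its syntactic construction is arithmetical in $a$; this is routine but must be stated, since the disjunction with stopping condition is defined by backwards recursion on a nonstandard length $c$ and one invokes $\PA$'s coding to see the whole construction is internal.
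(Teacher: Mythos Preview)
Your proposal is correct and follows essentially the same approach as the paper: define $\gamma_{a+1}$ via a disjunction with stopping condition built from rank-testing sentences $\alpha_i[\gamma_a]$ and high-rank formulae $\beta_i$, then invoke Theorem~\ref{tw_alternatywy_z_warunkiem_stopu} to collapse it to $\beta_{k_0}$ for the standard $k_0$ determined by $r_p(\gamma_a)$. The paper streamlines by first assuming without loss of generality that $\phi_j \rightarrow \phi_i$ for $i<j$, which lets it take $\beta_n := \phi_n$ and the simpler $\alpha_n[\psi] := \exists x\,(\psi(x)\wedge\neg\phi_n(x))$ (expressing ``rank $\leq n$'' rather than your ``rank $<n$''); your explicit conjunctions $\beta_n=\bigwedge_{j\le n}\phi_j$ achieve the same effect without that reduction, and the extra conjunct $\alpha_i[\gamma_a]\wedge{}$ you place in the content terms is harmless but redundant, since the stopping-condition construction already guarantees $T\alpha_{k_0}$ at the pivot.
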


\begin{proof}
	Fix $(M,T) \models \CT^-$ and $p = (\phi_i)_{i<c}$, a coded sequence of arithmetical formulae such that for any $k \in \omega$,
	\begin{displaymath}
	(M,T) \models \exists x \ T \bigwedge_{i \leq k} \phi_i(\num{x}) .
	\end{displaymath}
	Without loss of generality we can additionally assume that for any $i<j\in\omega$, 
	\begin{displaymath}
	(M,T) \models T  \forall x \ \Big(  \phi_j(x) \rightarrow \phi_i(x) \Big).
	\end{displaymath}
	We will define the sequence $(\gamma_i)$ using disjunctions with a stopping condition.	First, notice that for a given  formula $\psi$, we can express that it has rank smaller than $n$.\footnote{This is slightly different than in the informal discussion in the previous section, where for clarity's sake $\alpha_n[\psi]$ was taken to mean that $\psi$ has \emph{greatest} possible rank smaller than $n$.} Let:
	\begin{eqnarray*}
	\alpha_0[\psi] & : = &  \neg \exists x \   \psi(x) \vee \exists x \ \big( \psi(x) \wedge \neg  \phi_0(x) \big) \\	
	\alpha_n[\psi] & : = & \exists x [\psi(x) \wedge \neg \phi_{n}(x)], \textnormal{ for $n>0$}.
	\end{eqnarray*}
	and (to keep our notation consistent)
	\begin{displaymath}
	\beta_n(x) := \phi_n(x).
	\end{displaymath} 
	Then,  for all $n\in \omega$, we have $r_p(\beta_n)\geq n$ and  
	\begin{displaymath}
	(M,T) \models T \alpha_n[\psi] \textnormal{ implies } r_p(\psi) < n,
	\end{displaymath}
	
		Now, we are in position to define a coded sequence $(\gamma_i)_{i<d}$ of formulae of nonstandard length which satisfies the conditions of the lemma. 
	
	Fix any nonstard $d$ and let
	\begin{eqnarray*}
		\gamma_0(x) & : = & (x=x) \\
		\gamma_{j+1}(x) & : = & \bigvee_{i=0}^{d, \alpha[\gamma_j]} \beta_i(x).
	\end{eqnarray*}

Let us check that $\gamma_i$ indeed satisfies the conditions of the lemma. Suppose that 
\begin{displaymath}
r_p(\gamma_a) \neq \infty.
\end{displaymath}
If $r_p(\gamma_a) = -\infty$, then $(M,T) \models T \alpha_0[\gamma_a]$. If $r_p(\gamma_a) = n$ for some  $n \in \omega$, then
\begin{displaymath}
(M,T) \models T \alpha_{n+1}[\gamma_a].
\end{displaymath}
Let $k$ be the least number such that
\begin{displaymath}
(M,T) \models T \alpha_k[\gamma_a].
\end{displaymath}
 Then by Theorem \ref{tw_alternatywy_z_warunkiem_stopu}, we see that
\begin{displaymath}
(M,T) \models \forall x \Big( T \gamma_{a+1}(\num{x}) \equiv  T \bigvee_{i=0}^{d, \alpha[\gamma_a]} \beta_i(\num{x}) \equiv T \beta_{k} (\num{x}) \Big).
\end{displaymath}
As we have already observed, $r_p(\beta_{k}) \geq  k > r_p(\gamma_a)$, so the sequence $(\gamma_i)_{i<d}$ satisfies the claim of the lemma.
\end{proof}

Now, Lachlan's Theorem follows immediately from Lemma  \ref{lem_dobre_porzadki_w_M} and Lemma \ref{lem_lemat_o_randze_dla p_rangi}.

\begin{remark}
	Notice that we can obtain a number of stronger results by inspection of the above proof. The modifications go in different directions and are sometimes mutually exclusive. Let us now list them.
	\begin{enumerate}
		\item Actually, the proof shows that any type coded in a model $(M,T) \models \CT^-$ is satisfied in that model. 
		\item Even stronger, the proof shows that if $(\phi_i)$ is a coded sequence of (possibly nonstandard) formulae such that for any $n$, there exists $x \in M$ for which $T\phi_i(\num{x})$ holds for $i \leq n$, then there exists $x \in M$ such that $T \phi_n(\num{x})$ holds for all $n \in \omega$. This result has been first formulated \cite{smith}, where it is attributed to an anonymous referee.
		\item In the proof, we do not use the full strength of $\PA$. Actually, $ \IDelta_0 + \exp$ is enough. We only need to apply syntactic operations to arbitrary formulae and to make iterations of these operations of some nonstandard length. 
		\item The proof actually works for $\CT^- \res I$ for a nonstandard cut $I$. Indeed, under such assumptions, we only need to additionally ensure that we take disjunctions with stopping conditions small enough so that they belong to the cut $I$.
		\item Actually, we can combine some of the above modifications: if $M \models \IDelta_0 + \exp$ expands to a model of $\CT^- \res I$ for nonstandard $I$, then $M$ realises all coded types. 
		\item The proof works with the binary satisfaction predicate (operating on formulae and valuations) in place of the truth predicate.
		\item We could define a natural analogue of $\CT^- \res I$ for a satisfaction predicate, a predicate which satisfies compositional conditions for arbitrary valuations and formulae with depth from a nonstandard cut. If a model of $\IDelta_0 + \exp$ expands to a model of such a theory, then it realises all coded arithmetical types.  
	\end{enumerate} 
\end{remark}


\section{Definability of partial inductive truth predicates}

In this section, we will present a refinement of Lachlan's Theorem which is also a strengthening of Smith's Theorem that in every model $(M,T) \models \CT^-$ there is an undefinable class (\cite{smith}, Theorem 2.10). A related result was proved in \cite{lelykwcislomodels}, Theorem 4.1. 

\begin{theorem} \label{th_ctminus_definiuje_utb}
	Let $(M,T) \models \CT^-$. Then there exists $T' \subset M$ that is definable in $(M,T)$, such that
	\begin{displaymath}
	(M,T') \models \CT \res c
	\end{displaymath}
	for a nonstandard $c \in M$.
\end{theorem}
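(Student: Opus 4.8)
The plan is to mimic the structure of the proof of Lachlan's Theorem, replacing the notion of $p$-rank (which measured approximation to a \emph{single} type) with a rank that simultaneously tracks the compositional behaviour of a candidate truth predicate up to some syntactic depth. Concretely, I would work inside $(M,T)$ and, for each $e \in M$, define a ``level-$e$'' approximation $T_e$ to a compositional truth predicate: a formula (with parameter $e$) whose intended meaning is a satisfaction-class-like object correct for all sentences of syntactic depth at most $e$. The compositional clauses of $\CT$ (the five axioms, now with full induction available in $(M,T)$ since we only need induction for $(M,T)$-definable properties, i.e. arithmetical-in-$T$ properties, which $(M,T)\models\CT^-$ does \emph{not} give us for free --- so care is needed here) should hold for $T_e$ as long as $e$ is ``good'', and the goal is to show there is a nonstandard $c$ for which $T_c$ works; setting $T' := T_c$ and restricting attention to $[0,c]$ then yields $(M,T')\models \CT\res c$.

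The key device is again Lemma~\ref{lem_dobre_porzadki_w_M} together with disjunctions with stopping conditions. First I would set up a rank function $r$ on $M$ measuring how ``deep'' a given candidate predicate is provably compositional: roughly, $r(e)$ is the largest standard $n$ such that the candidate at level $e$ satisfies all compositional axioms when restricted to depth $\le n$, with $r(e)=\infty$ meaning it is fully compositional on a nonstandard cut. As in the Rank Lemma, I would produce a coded sequence $(\gamma_i)_{i<d}$ of nonstandard length, built using disjunctions with stopping conditions over the sentences $\alpha_n$ expressing ``the level-$e$ candidate is not compositionally correct at depth $n$,'' so that Theorem~\ref{tw_alternatywy_z_warunkiem_stopu} forces the constructed $\gamma_{a+1}$ to behave exactly like the first genuinely-correct level, hence to have strictly larger rank than $\gamma_a$ whenever $r_p(\gamma_a)\neq\infty$. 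Applying Lemma~\ref{lem_dobre_porzadki_w_M} to the order $\{-\infty\}\cup\omega\cup\{\infty\}$ produces an element of maximal rank, i.e. a definable $T'$ that is compositional on a nonstandard cut. Then I would do a standard overspill-style argument \emph{within the arithmetical part} (which is legitimate, since arithmetic induction is available) to pull the nonstandard cut down to an actual parameter $c\in M$, obtaining $(M,T')\models\CT\res c$; the full-induction part of $\CT\res c$ comes for free because $T'$ is first-order definable in $(M,T)$ and $(M,T)$ satisfies $\PA$, hence $(M,T',\dots)$ satisfies induction for formulae mentioning $T'$.

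The main obstacle --- and where the real work lies --- is arranging that the level-$e$ candidate predicates are themselves uniformly \emph{definable} (with parameter $e$) in $(M,T)$ and that the ``stopping'' mechanism correctly picks out the greatest depth at which compositionality has not yet failed. The naive candidate ``$\psi$ is true in the $T$-sense'' does not stratify by depth, so I expect to need the trick of encoding, for each $e$, a disjunction with stopping condition whose disjuncts are the instances $\beta_n$ asserting ``every subformula of depth $<n$ is evaluated compositionally,'' exactly parallel to the use of $\beta_n(x):=\phi_n(x)$ in the Rank Lemma, but now the $\beta_n$ must be read as \emph{global} sentences about the candidate rather than formulae in a free variable. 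Making this precise --- in particular verifying that $r(\beta_n)\ge n+1$ and that $(M,T)\models T\alpha_n[\psi]$ implies $r(\psi)\le n$ in this new setting --- is the technical heart, and it is here that the regularity axiom and the absence of $T$-induction must be handled delicately. Once that bookkeeping is in place, the appeal to Theorem~\ref{tw_alternatywy_z_warunkiem_stopu} and Lemma~\ref{lem_dobre_porzadki_w_M} is as before, and the passage from a nonstandard cut to a concrete $c$ is routine.
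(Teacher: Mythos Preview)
Your overall architecture is right --- rank function, disjunctions with stopping conditions, Lemma~\ref{lem_dobre_porzadki_w_M} --- but there is a genuine gap in the treatment of induction, and it is precisely the point where the paper's argument does something you have not anticipated.

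You write that ``the full-induction part of $\CT\res c$ comes for free because $T'$ is first-order definable in $(M,T)$ and $(M,T)$ satisfies $\PA$, hence $(M,T',\dots)$ satisfies induction for formulae mentioning $T'$.'' This is false: $(M,T)\models\CT^-$ gives induction only for the arithmetical language, not for formulae containing $T$. Since your $T'$ is defined using $T$, definability buys you nothing here. You even flagged the worry earlier (``which $(M,T)\models\CT^-$ does \emph{not} give us for free''), but then resolved it incorrectly. Likewise, your proposed overspill ``within the arithmetical part'' to pass from a cut to a parameter $c$ cannot be carried out, because the statement you want to overspill involves $T'$ and is not arithmetical.

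The paper's fix is to \emph{build induction into the rank}. Rather than ranking candidates by the depth to which they are compositional, the paper (via Proposition~\ref{stw_ctc_wynika_z_utb}) reduces the target to: find a definable $T'$ satisfying all standard instances of induction \emph{and} all uniform Tarski biconditionals. The rank of $\gamma$ is then the largest $n$ such that $(M,T)\models T\bigl[\ind_i(\gamma)\wedge\forall\bar s\,(\gamma(\num{\phi_i(\bar s)})\equiv\phi_i(\bar{\val s}))\bigr]$ for all $i\le n$. The crucial point is that the witnesses $\beta_n(x):=\bigvee_{i\le n}\exists\bar s\,(x=\phi_i(\num{\bar s})\wedge\phi_i(\bar{\val s}))$ are \emph{standard arithmetical} formulae, so every $\ind_i(\beta_n)$ is a theorem of $\PA$ and hence true under $T$; thus $r(\beta_n)\ge n$ automatically. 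When the machinery delivers a $\gamma$ of rank $\infty$, the resulting $T'=\gamma(M)$ genuinely satisfies full induction (extracted from $T\ind_i(\gamma)$ via finitely many compositional unwindings), and \emph{then} overspill inside $(M,T')$ is legitimate and produces the nonstandard $c$. Your compositionality-depth rank does not see induction at all, so even a $\gamma$ of rank $\infty$ in your sense could define a $T'$ with no induction whatsoever; the argument would stall both at the ``full induction'' clause of $\CT\res c$ and at the overspill step.
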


The proof will closely follow our argument from the previous section: we will define a suitable notion of rank and demonstrate that there is a coded sequence of formulae whose rank is increasing. 

We will try to find a (nostandard) formula $\gamma$ such that $T'$ is defined as $\gamma(M) : = \set{x \in M}{(M,T) \models T \gamma(\num{x})}$. Our rank will measure how close a given formula $\gamma$ gets to  defining a truth predicate satisfying $\CT \res c$. Such a rank can be found thanks to the following proposition.

\begin{proposition} \label{stw_ctc_wynika_z_utb}
	Let $M \models \PA$. Suppose that $(M,T')$ satisfies full induction in the extended language, structural regularity property $\SRP$, and the following scheme of uniform Tarski's biconditionals:
	\begin{displaymath}
	 \forall \bar{s} \in \ClTermSeq_{\PA} \Big( (T'(\num{\phi(\bar{s})})) \equiv  \phi(\bar{\val{s}})\Big)
	\end{displaymath}
	for all (standard) arithmetical formulae $\phi$. Then there exists a nonstandard $c \in M$ and $T'' \subset T'$ such that
	\begin{displaymath}
	(M,T'') \models \CT \res c.
	\end{displaymath}
\end{proposition}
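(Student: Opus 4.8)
The plan is to recover a genuine compositional truth predicate from the uniform Tarski biconditionals by an overspill argument, exploiting that $(M,T')$ satisfies full induction in the extended language. The key observation is that for each standard $n$, the statement ``$T'$ behaves compositionally on all sentences of syntactic depth $\le n$ and agrees with the arithmetical truth definition on them'' is expressible by a single formula $\Theta(n)$ in the language with $T'$, and each such $\Theta(n)$ is provable in the theory at hand. First I would fix, for each standard $n$, the partial arithmetical truth predicate $\Tr_n(x)$ --- the usual $\Sigma_n$/$\Delta_0$-style formula that correctly decides truth of all sentences of syntactic depth at most $n$ (with parameters substituted via numerals), which $\PA$ proves to be compositional on that fragment. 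Using the uniform Tarski biconditional scheme, for each standard formula $\phi$ with $\sd(\phi)\le n$ one gets $\forall\bar s\,(T'(\num{\phi(\bar s)})\equiv\Tr_n(\num{\phi(\bar s)}))$, and since there are only finitely many formula-shapes of depth $\le n$ up to the choice of terms, one can assemble these finitely many instances into a single sentence $\Theta(n)$ asserting that $T'$ and $\Tr_n$ agree on all sentences of syntactic depth $\le n$. Then $\Theta(n)$ holds in $(M,T')$ for every standard $n$.

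Next I would apply overspill. The set $\{n : (M,T')\models\Theta(n)\}$ is definable in $(M,T')$ (here full induction in the extended language is used, so that overspill is available for formulae mentioning $T'$), it contains all standard $n$, hence it contains some nonstandard $c\in M$. Fix such a $c$. Now define
\begin{displaymath}
T'' := \set{x \in M}{x \in \Sent_{\PA}, \ \sd(x) \le c, \ \text{and } (M,T')\models \Tr_c(x)}.
\end{displaymath}
Since $\Theta(c)$ holds, $T''$ agrees with $T'$ on sentences of syntactic depth $\le c$, so $T'' \subseteq T'$ as required (after restricting $T'$ to $\Sent_{\PA}$, which is harmless). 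Because $\Tr_c$ is an arithmetical formula which $\PA$ proves to be compositional on the fragment of sentences of depth at most $c$ --- the base clause for atomic sentences $s=t$, the clauses for $\neg$, for $\vee$, for $\exists$, and the regularity clause are all arithmetical facts provable in $\PA$ about $\Tr_c$, valid for sentences whose syntactic depth stays below $c$ --- the structure $(M,T'')$ satisfies the axioms of $\CT^-\res c$; and since $(M,T'')$ is a reduct-expansion of a model of $\PA$ where moreover the truth predicate is arithmetically definable (hence full induction in the language with $T''$ reduces to arithmetical induction), $(M,T'')$ in fact satisfies $\CT\res c$.

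The main obstacle I expect is the bookkeeping around syntactic depth versus term size. The axioms of $\CT^-\res c$ restrict $\sd(\phi)\in[0,c]$ but place no bound on the terms appearing, whereas the natural arithmetical truth definition $\Tr_c$ is usually set up with a bound on the complexity (including, implicitly, a way of evaluating terms of arbitrary size). One must check that $\PA$ proves the compositional clauses for $\Tr_c$ uniformly, for all sentences of syntactic depth $\le c$ regardless of term size --- this is where the evaluation function $\val{\cdot}$ and the relation $y=\num{x}$ do the work, and where one has to be slightly careful that substituting a numeral $\num{x}$ for a variable increases syntactic depth only by the depth of $\num{x}$, which is again unbounded; the standard fix is to use the term-evaluation already built into $\Tr_c$ so that the relevant depth is that of the original formula $\phi$, not of $\phi[\num{x}/v]$. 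Matching these conventions to the precise form of the clauses (2)--(6) in the definition of $\CT^-\res c$, and in particular verifying the regularity clause (6) for $\Tr_c$, is the one genuinely technical point; everything else is the overspill packaging described above.
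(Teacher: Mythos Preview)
Your overall plan---verify a condition $\Theta(n)$ for every standard $n$ and overspill---is exactly what the paper intends (the paper only says the proposition ``can be proved with an easy application of overspill''). But your implementation has a real gap, coming from the detour through the partial truth predicates $\Tr_n$.

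The problem is that $\Theta(n)$, as you describe it, is not a single formula of the language $\LPA\cup\{T'\}$ with $n$ as a free variable. You assemble $\Theta(n)$ from the finitely many Tarski biconditionals for shapes of depth $\le n$ together with the statement that $T'$ agrees with $\Tr_n$; but $\Tr_n$ is a \emph{different} arithmetical formula for each $n$, of strictly increasing complexity, and there is no single arithmetical $\Psi(n,x)$ with $\Psi(\num{n},\cdot)$ equivalent to $\Tr_n(\cdot)$ for all standard $n$---that would be an arithmetical truth definition, contradicting Tarski. Hence the set $\{n:(M,T')\models\Theta(n)\}$ is not definable in $(M,T')$ and overspill does not apply. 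For the same reason, your definition of $T''$ via ``$(M,T')\models \Tr_c(x)$'' for a nonstandard $c$ is not meaningful: $\Tr_c$ exists only as a nonstandard G\"odel code in $M$, not as a formula you can evaluate, and your subsequent claim that $T''$ is ``arithmetically definable'' (and therefore automatically inductive) cannot be right.

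The fix is to drop $\Tr_n$ entirely. Let $\Theta(n)$ assert directly, in the language with $T'$, that the compositional clauses (1)--(6) of $\CT^-\res c$ hold for all sentences $\phi$ with $\sd(\phi)\le n$; this \emph{is} a single formula with $n$ free. For standard $n$ one checks $\Theta(n)$ from the uniform Tarski biconditionals (every sentence of standard syntactic depth is an instance $\phi(\bar s)$ of a standard $\phi$, and the compositional behaviour of $T'$ on $\phi(\bar s)$ reduces via the biconditionals to the compositional behaviour of $\phi$ itself). Overspill in $(M,T')$ now yields a nonstandard $c$ with $\Theta(c)$. Put
\[
T'' := \set{\phi\in T'}{\sd(\phi)\le c}.
\]
Then $T''\subseteq T'$ and $(M,T'')\models \CT^-\res c$ by choice of $c$; and since $T''$ is definable in $(M,T')$ from the parameter $c$, full induction for formulae mentioning $T''$ is inherited from full induction for $T'$, so $(M,T'')\models \CT\res c$. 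This is the ``easy overspill'' the paper alludes to; your version tries to outsource both the overspill condition and the definition of $T''$ to the arithmetical hierarchy, and that is precisely what cannot be done uniformly.
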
 
The proposition can be proved using an easy overspill argument. Notice that if a sentence $\phi$ has standard syntactic depth $n \in \omega$, then there exists a standard sentence $\psi \simeq \phi$, so $\SRP$ allows us to conclude that the truth predicate behaves compositionally on all sentences of standard complexity.

Let $(\ind_i(P))$ be a primitive recursive enumeration of all instances of the induction scheme with one extra second-order variable $P$. Then, slightly abusing the notation, we write for a (possibly nonstandard) formula $\psi$:
\begin{displaymath}
\ind_i(\psi)
\end{displaymath}
meaning the $i$-th instance of the induction scheme with the formula $\psi$ substituted for the variable $P$. 

Let $(\phi_i)$ be a primitive recursive enumeration of arithmetical formulae. Now, we are ready to define a suitable notion of rank:

\begin{definition} \label{def_ranga_dla_utb}
	Let $(M,T) \models \CT^-$ and let $\gamma \in \Form^{\leq 1}(x)$. We define a rank of the formula $\gamma$, $r(\gamma)$ as follows:
	\begin{displaymath}
	r(\gamma) = \left\{ \begin{array}{ll}
	- \infty, & \textnormal{if } (M,T) \models \neg \exists x T \gamma(\underline{x}) \vee \neg T \ind_0(\gamma) \vee  \\
	& \exists \phi, \psi \in \Sent_{\PA} \ \phi \simeq \psi \wedge \neg T \Big(\gamma(\num{\phi}) \equiv \gamma(\num{\psi}) \Big) \vee \\
	&  \neg T \ \forall \bar{s} \in \ClTermSeq_{\PA} \Big(  (\gamma(\num{\phi_0(\bar{s})})) \equiv  \phi_0(\bar{\val{s}})\Big). \\
	n, &  \textnormal{if } (M,T) \models \exists x \ T\gamma(\underline{x}) \textnormal{ and } n \in \omega \textnormal{ is the greatest such that } \\
	& (M,T) \models T \bigwedge_{i \leq n} \Big[\ind_i(\gamma)  \wedge  \forall \bar{s} \in \ClTermSeq_{\PA} \Big( (\gamma(\num{\phi_i(\bar{s})})) \equiv  \phi_i(\bar{\val{s}})\Big)\Big] \\
	\infty, & \textnormal{if }  (M,T) \models \exists x T \phi(\num{x}) \textnormal{ and for all $i\in\omega$,} \\
	& (M,T) \models T \Big[\ind_i(\gamma)  \wedge \forall \bar{s} \in \ClTermSeq_{\PA} \Big( (\gamma(\num{\phi_i(\bar{s})})) \equiv  \phi_i(\bar{\val{s}})\Big) \Big]. 
	\end{array} \right.
	\end{displaymath}
\end{definition}

To find the required $\gamma$ with $r(\gamma) = \infty$, we will  use Lemma \ref{lem_lemat_o_randze_dla p_rangi}. 

As in the previous section, notice that we can express that $\psi$ has rank smaller than $n$. Let
\begin{eqnarray*}
	\alpha_0[\psi] & : = & \neg \exists x \psi(x) \vee \neg \ind_{0}(\psi) \vee \\
	& & \exists \bar{s} \in \ClTermSeq_{\PA} \neg \Big(\psi(\phi_{0}(\bar{s})) \equiv \phi_{0}(\bar{\val{s}}) \Big) \vee \\
	&  & \exists \phi, \phi' \in \Sent_{\PA} \ \Big( \phi \simeq \phi' \wedge \neg \big( \psi(\num{\phi}) \equiv \psi(\num{\phi'})  \big) \Big ). \\
	\alpha_n[\psi] & : = & \neg \ind_{n}(\psi) \vee \exists \bar{s} \in \ClTermSeq_{\PA} \neg \Big(\psi(\phi_{n}(\bar{s})) \equiv \phi_{n}(\bar{\val{s}}) \Big) \textnormal{ for } n>0.
\end{eqnarray*}

We can also readily find formulae, whose rank equals at least $n$. Let 
\begin{displaymath}
\beta_n(x) = \bigvee_{i=0}^n [\exists \bar{s} \ \ x \simeq \phi_i(\bar{s}) \wedge \phi_i(\bar{\val{s}})].
\end{displaymath}

As in the previous section, we define a coded sequence of formulae $\gamma_i$:
\begin{eqnarray*}
\gamma_0(x) & : = & (x=x) \\
\gamma_{j+1}(x) & : = & \bigvee_{i=0}^{d, \alpha[\gamma_j]} \beta_i(x).
\end{eqnarray*} 

Now, we are in position to formulate and prove an analogue of Lemma \ref{lem_lemat_o_randze_dla p_rangi}.

\begin{lemma} \label{lem_lemat_o_randze_dla_utb}
	Let $(M,T) \models \CT^-$. Then for any $a \in M$ either $r(\gamma_a) = \infty$ or 
	\begin{displaymath}
	r(\gamma_{a+1}) > r(\gamma_a).
	\end{displaymath} 
\end{lemma}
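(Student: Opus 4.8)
The plan is to mimic the proof of Lemma~\ref{lem_lemat_o_randze_dla p_rangi} almost verbatim, replacing the $p$-rank machinery by the rank of Definition~\ref{def_ranga_dla_utb}. First I would observe that for the sequences $(\alpha_n[\psi])_{n\le d}$ and $(\beta_n)_{n\le d}$ just defined, two facts hold: (i) $(M,T)\models T\alpha_n[\psi]$ implies $r(\psi)\le n$ -- this is immediate from the definition of $\alpha_n$ and the compositional clauses for negation and disjunction in $\CT^-$ (the clause inside $r$ at level $i\le n$ is exactly the negation of what $\alpha_i[\psi]$ asserts, plus the nonemptiness condition); and (ii) $r(\beta_n)\ge n+1$. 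For (ii) I need to check that $\beta_n$, as a definition of a partial truth predicate, verifies the first $n+1$ induction instances $\ind_i(\beta_n)$ and the first $n+1$ uniform Tarski biconditionals. The biconditionals hold by inspection: for $i\le n$, $T\beta_n(\num{\phi_i(\bar s)})$ unwinds, via the compositional axioms (including regularity, to pass between $\bar s$ and $\bar{\val s}$), to $\phi_i(\bar{\val s})$, since the disjunct indexed by $i$ is present and the syntactic-identity disjuncts for $j\ne i$ do not fire on a genuine code of $\phi_i(\bar s)$. The induction instances $\ind_i(\beta_n)$ for $i\le n$ hold because $\beta_n(M)$ is, on the relevant domain, literally the graph of the arithmetical truth definition for $\{\phi_0,\dots,\phi_n\}$, so each such instance is an arithmetically provable sentence and $\CT^-$ proves all true arithmetical sentences of standard provability -- more carefully, $\ind_i(\beta_n)$ is (provably in $\PA$) equivalent to an arithmetical sentence, hence its truth is decided correctly by $T$.

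Next I would run the rank-increasing argument exactly as before. Suppose $r(\gamma_a)\ne\infty$. If $r(\gamma_a)=-\infty$ then $(M,T)\models T\alpha_0[\gamma_a]$ (since $\alpha_0[\psi]=\neg\ind_0(\psi)\vee\exists\bar s\,\neg(\cdots)$ -- here I should double-check the edge case: the $\alpha_0$ of this section is not $\neg\exists x\,\psi(x)$ as in Section~4, so the $-\infty$ case needs a separate word; in fact if $\gamma_a$ defines the empty set then the Tarski biconditional at $0$ already fails, so $T\alpha_0[\gamma_a]$ still holds). If $r(\gamma_a)=n\in\omega$, then by maximality of $n$ we get $(M,T)\models T\alpha_{n+1}[\gamma_a]$, hence there is some $k\le n+1$ least with $(M,T)\models T\alpha_k[\gamma_a]$, and this $k$ is standard. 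Then Theorem~\ref{tw_alternatywy_z_warunkiem_stopu} applies to the coded sequences $(\alpha_i[\gamma_a])_{i\le d}$, $(\beta_i)_{i\le d}$ and yields
\begin{displaymath}
(M,T)\models\forall x\,\Big(T\gamma_{a+1}(\num{x})\equiv T\bigvee_{i=0}^{d,\alpha[\gamma_a]}\beta_i(\num{x})\equiv T\beta_k(\num{x})\Big).
\end{displaymath}
Since $T\gamma_{a+1}$ and $T\beta_k$ agree on all numerals, $r(\gamma_{a+1})=r(\beta_k)\ge k+1>r(\gamma_a)$, using $k\ge r(\gamma_a)+1$ in the finite case and $k=0>-\infty$ in the empty case. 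This gives the claim for every $a\in M$.

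The step I expect to be the genuine obstacle is fact (ii), specifically verifying that $(M,T)\models T\,\ind_i(\beta_n)$ for standard $i\le n$. Unlike the Tarski biconditionals, an induction instance $\ind_i(\beta_n)$ is a sentence mentioning $\beta_n$ as a defined predicate, so one must argue that after substituting the (standard, though possibly with nonstandard-length index $n$) formula $\beta_n$ for $P$ the result is a sentence whose truth under $T$ is forced. The clean way is: $\ind_i(\beta_n)$ is provably equivalent in $\PA$ to the arithmetical sentence obtained by literally substituting the formula $\beta_n(x)$ for the second-order variable, and $\CT^-$ proves $T$ of every theorem of $\PA$ of \emph{standard} length -- but here $\beta_n$ may be nonstandard, so $\ind_i(\beta_n)$ is itself possibly nonstandard and this principle does not directly apply. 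The correct resolution, which I would spell out, is that for each \emph{fixed standard} $i$ the statement ``for all formulas $\psi$ with one free variable, $\ind_i(\psi)$'' is a single standard arithmetical theorem of $\PA$, hence $T$ of its universal closure holds by the compositional axioms, and then instantiating the universal arithmetical quantifier (which the truth predicate handles correctly by the existential/universal clause together with regularity) at the code of $\beta_n$ gives $(M,T)\models T\,\ind_i(\beta_n)$. This is exactly the kind of technicality the regularity axiom was included to smooth over, so I would invoke it explicitly at that point, and I would remark that the same routine argument underlies the ``easy application of overspill'' cited for Proposition~\ref{stw_ctc_wynika_z_utb}.
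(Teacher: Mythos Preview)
Your approach is the paper's own: verify the two facts about $\alpha_n[\psi]$ and $\beta_n$, then invoke Theorem~\ref{tw_alternatywy_z_warunkiem_stopu} exactly as in Lemma~\ref{lem_lemat_o_randze_dla p_rangi}. The paper's proof is three lines and simply asserts ``by our construction, $r(\beta_k)\ge n+1$'', so the only substantive comparison is with your elaboration of fact~(ii), and there you have talked yourself into a non-problem.

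You worry that $\beta_n$ may be nonstandard, making $\ind_i(\beta_n)$ a nonstandard sentence whose $T$-truth is not forced by $\CT^-$. But look at where fact~(ii) is actually \emph{used}: you need $r(\beta_k)\ge k+1$ only for the $k$ produced a few lines earlier, and that $k$ satisfies $k\le n+1$ with $n=r(\gamma_a)\in\omega$ (or $k=0$ in the $-\infty$ case). So $k$ is standard; since $(\phi_i)$ is a primitive recursive enumeration, $\beta_k$ is a genuinely standard formula, and each $\ind_i(\beta_k)$ for $i\le k$ is a standard instance of induction --- a theorem of $\PA$, true in $M$, and hence in $T$ by the compositional clauses applied finitely many times. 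That is the entire argument, and it is why the paper does not pause here.

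Your proposed workaround, by contrast, does not stand as written. The purported sentence ``for all formulas $\psi$ with one free variable, $\ind_i(\psi)$'' is not an arithmetical sentence at all: asserting the \emph{truth} of the code $\ind_i(\psi)$ uniformly in $\psi$ already requires a truth predicate. What \emph{is} arithmetical and $\PA$-provable is $\forall\psi\ \textnormal{Prov}_{\PA}(\ind_i(\psi))$, but passing from internal provability to $T$ needs global reflection, which $\CT^-$ does not supply. So had $\beta_k$ really been nonstandard your fix would not have rescued the argument; fortunately the situation never arises, and the direct route above is all that is needed.
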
 
\begin{proof}
	Suppose that $r( \gamma_a) \neq \infty$. This means that $r (\gamma_a) = - \infty$ or $r (\gamma_a ) = n \in \omega$. Then we have 
	\begin{displaymath}
	(M,T) \models T \alpha_k[\gamma_a]
	\end{displaymath}
	for $k = 0 $ or $k= n+1$, respectively, and $k$ is the least such number. Therefore by Theorem \ref{tw_alternatywy_z_warunkiem_stopu}, we have:
\begin{displaymath}
(M,T) \models \forall x \Big( T \gamma_{a+1}(\num{x}) \equiv T \bigvee_{i=0}^{d,\alpha} \beta_i(\num{x}) \equiv T \beta_k (\num{x}) \Big) .
\end{displaymath}
But, by our construction, $r (\beta_k) \geq n+1 > r (\gamma_a).$
\end{proof}

Theorem \ref{th_ctminus_definiuje_utb} follows immediately by Proposition \ref{stw_ctc_wynika_z_utb}, Lemma \ref{lem_dobre_porzadki_w_M} for $f(x) = r(\gamma_x)$ and Lemma \ref{lem_lemat_o_randze_dla_utb}.

\section{Non-extendable partial truth predicates}

In this section, we apply disjunctions with stopping condition to study extensions of models of $\CT^-$. We are dealing with the following question. Suppose that $M \models \PA$, $I \subset M$ is a nonstandard cut, and $(M,T) \models \CT^- \res I$.  Is there a $T' \supset T$ such that $(M,T') \models \CT^-$?

The above question asks about possible obstructions to the existence of a fully compositional truth predicate. The most classical result in this vein is Lachlan's Theorem which amounts to saying that in some models $M \models \PA$, the natural truth predicate defined on formulae of standard complexity (the unique smallest predicate satisfying $\CT^- \res \omega$) cannot be extended to a full truth predicate. 

As we have already remarked,  if $M \models \PA$ is not a recursively saturated model, then one cannot find a truth predicate $T$ satisfying $\CT^- \res I$ for a nonstandard cut $I$. The proof of Lachlan's Theorem applies with some additional care paid to the choice of parametres so that the depths all relevant formulae are in the cut $I$. Now, our question in this section asks whether once a truth predicate is already defined on a \emph{nonstandard} cut of formulae, there can be any further obstructions to extending it to the whole model.

This question may be also viewed from a slightly different angle. Smith has proved in (\cite{smith}, Theorem 4.3) that there exists a model $(M,T) \models \CT^-$ such that it cannot be end-extended to another model of $\CT^-$. In the proof of Smith's theorem one shows that such an extension cannot be found if a nonstandard formula $\phi$ defines a surjection from a cut $J$ to the whole model (i.e., the formula $T\phi(\num{x}, \num{y})$ is functional in $x$ and defines that surjection).

Now, we can ask the question, whether this is essentially the only possible obstruction. We asked this question trying to show that if $(M,T) \models \CT^-$ and $T$ believes all the instances of induction to be true, then it has an end extension. Notice that such a truth predicate cannot display the pathology used by Smith. Moreover, one can show that in such case, under an additional assumption that $(M,T) \models \SRP$,  there exists a proper end extension $(M,T) \subset_e (N,T')$ such that $M \preceq_e N$ and  $(N,T') \models \CT^- \res M$. This leads us to the following question about extensions of $\CT^-$: let $(M,T) \models \CT^- + \SRP$. Suppose that $M \preceq_e N$ is an elementary end extension. Let $T \subset T' \subset N$ be a truth predicate satisfying $\CT^- \res M$. In particular, we know that $(M,T)$ is free of pathologies employed by Smith. Does there exist $T'' \supset T'$ such that $(N,T'') \models \CT^-$?

We answer both questions in the negative. We will give a proof for a general cut satisfying some additional conditions. It is however known that such cuts may be even required to be elementary initial segments which are recursively saturated models of $\PA$. Every recursively saturated model of PA has elementary cuts that satisfy the condition.

\begin{theorem} \label{tw_ctminus_co_sie_nie_rozszerza}
	Let $M \models \PA$ be a countable recursively saturated model and let  $I = \set{x \in M }{\exists n \in \omega \ \  x < a_n}$ for some increasing coded sequence $a \in M$ such that the difference $a_{n+1} - a_n$  is nonstandard for any $n$. Then there exists $T \subset M$ such that $(M,T) \models \CT^- \res I$, but there is no $T' \supset T$ such that $(M,T') \models \CT^-$. 
\end{theorem}

A slight modification of the proof yields the following result:

\begin{theorem} \label{tw_ctminus_co_sie_nie_rozszerza_z_podmodelu}
		Let $M \preceq_e N$ be a countable recursively saturated models of $\PA$. Suppose that  $M = \set{x \in N }{\exists n \in \omega \ \  x < a_n}$ for some increasing coded sequence $a \in M$ such that the difference $a_{n+1} - a_n$  is nonstandard for any $n$. 
	Then there exists $T \subset N$ such that $(N,T) \models \CT^- \res M$ and  $(M,T\cap M) \models \CT^-$, but there is no $T' \supset T$ such that $(N,T')  \models \CT^-$.
\end{theorem}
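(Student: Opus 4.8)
The plan is to modify the construction behind Theorem~\ref{tw_ctminus_co_sie_nie_rozszerza} so that the pathological behaviour is localized to the submodel $M$ rather than engineered afresh on a plain cut. Concretely, I would first run the construction of Theorem~\ref{tw_ctminus_co_sie_nie_rozszerza} \emph{inside} $M$: since $M$ is itself countable and recursively saturated, and $M$ sits inside $N$ as the cut $\set{x\in N}{\exists n\in\omega\ x<a_n}$ determined by the coded sequence $a\in N$, I obtain a truth predicate $T_M\subset M$ with $(M,T_M)\models\CT^-$ which, viewed from $N$, will carry the stopping-condition pathology: there will be a nonstandard formula $\delta\in M$ and a cut $J\subset M$ so that $T_M\delta(\num x,\num y)$ defines a surjection of $J$ onto all of $M$ (this is exactly the obstruction isolated in Smith's Theorem 4.3 and reproduced via disjunctions with stopping condition). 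The point of the ``slight modification'' is that the diagonal formula $\delta$ and the cut $J$ are built from the sequence $a$ in a way that makes them objects of $M$, and the surjection lands on $M$'s universe, not on $N$'s.

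Second, I would extend $T_M$ to a predicate $T\subset N$ satisfying $\CT^-\res M$. Here I use that $M\preceq_e N$ and that $N$ is countable and recursively saturated: by the Enayat--Visser technique (\cite{enayatvisser}), starting from $(M,T_M)\models\CT^-$ one can find $T\subset N$ with $T\cap M=T_M$ such that $(N,T)\models\CT^-\res M$, i.e. $T$ is fully compositional for all $\LPA$-formulae whose syntactic depth lies in $M$, with arbitrary (possibly non-$M$) term substitutions. The care needed is that compositionality of $T$ for formulae in the cut $M$ must be genuinely internal to $N$ — this is what $\CT^-\res M$ demands as a cut-condition in $N$ — while still agreeing with $T_M$ on the sentences of $M$; recursive saturation of $N$ plus the resplendence-style argument in Section~3 of \cite{enayatvisser} handles this, and $(M,T\cap M)=(M,T_M)\models\CT^-$ by construction.

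Third, I would derive the non-extendability. Suppose toward a contradiction that $T'\supset T$ with $(N,T')\models\CT^-$. Then in particular $T'$ is compositional for the nonstandard formula $\delta\in M$, so $(N,T')$ believes $T'\delta(\num x,\num y)$ to behave compositionally; combined with the regularity axiom and the fact that $T'$ extends $T$ (hence agrees with $T_M$ on all sentences $\delta(\num i,\num j)$ with $i,j\in M$), one concludes that the set $\set{x\in N}{(N,T')\models T'\delta(\num x,\num y)\text{ for some }y}$ must be an initial segment of $N$, namely the downward closure of $J$, which is a proper cut since $J\subset M$ and $M$ is a proper cut of $N$. But compositionality of $T'$ for the (nonstandard but internal) statement ``$\delta$ defines a surjection of $J$ onto $M$'' forces the range to be exactly $M$ — again a proper cut of $N$ — and a cut cannot be the surjective image of a cut under a function that $T'$ certifies to be total on $J$ while $\CT^-$-compositionality propagates $M$-definable pigeonhole facts about $\delta$. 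This is the Smith obstruction: a $\CT^-$-truth predicate on $N$ cannot tolerate an internal formula mapping a cut onto a cut below $\mathrm{top}(N)$, so no such $T'$ exists.

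The main obstacle I expect is the second step: getting an extension $T$ of $T_M$ to $N$ that is simultaneously (i) fully compositional in $N$'s sense for \emph{all} formulae in the cut $M$ (not merely in $\omega$), (ii) conservative enough that $T\cap M$ is still literally $T_M$, and (iii) compatible with $M\preceq_e N$ being an \emph{end} extension. Point (i) is delicate because $\CT^-\res M$ in $N$ quantifies over all $N$-formulae of $M$-bounded depth, which is a nonstandard cut of formulae, so one must iterate the Enayat--Visser pushouts cofinally through $M$ while keeping recursive-saturation-based amalgamation under control; this is precisely the place where the hypothesis that $M$ is cut out by a \emph{coded} sequence $a\in N$ is used, since it lets us stratify the cut $M$ of formulae by the levels $a_n$ and perform the extension level by level. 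Once $T$ is in hand, steps one and three are straightforward adaptations of the already-established Theorem~\ref{tw_ctminus_co_sie_nie_rozszerza} and of Smith's argument.
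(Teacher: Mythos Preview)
Your plan has a genuine gap at the very first step. The construction of Theorem~\ref{tw_ctminus_co_sie_nie_rozszerza} does \emph{not} produce a full $\CT^-$ model: it builds a predicate on the ambient model satisfying only $\CT^-\res I$ for a \emph{proper} cut $I$. So ``running it inside $M$'' would yield $(M,T_M)\models\CT^-\res I$ with $I\subsetneq M$, not $(M,T_M)\models\CT^-$ as you need. You also misidentify the obstruction produced there. Theorem~\ref{tw_ctminus_co_sie_nie_rozszerza} does not manufacture a Smith-style surjection from a cut onto the model; it arranges that the formulae $\eta_{a_n}$ (of syntactic depth $2a_n+2$) each define a singleton $\{b_n\}$, where the externally chosen sequence $(b_n)$ is downward cofinal in the nonstandard part. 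Non-extendability then comes from the rank argument with disjunctions with stopping condition: a full extension would force a formula of rank $\infty$, i.e.\ one defining a nonempty set trapped strictly between $\omega$ and all $b_n$, which is impossible. Your Step~3, appealing to ``a cut cannot be the surjective image of a cut under a function that $T'$ certifies to be total'', is not the contradiction used here and, as stated, is not obviously a contradiction in $(N,T')$ at all: $T'$ is free to make $\delta(\num j,\num y)$ true for new $y\in N\setminus M$, destroying functionality, and a $T'$-definable map between two proper cuts is not in itself an obstruction.

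The paper's modification goes the other way around: it does \emph{not} first build something in $M$ and then lift it to $N$. It reruns the layered construction of Theorem~\ref{tw_ctminus_co_sie_nie_rozszerza} directly in $N$, with $M$ playing the role of the cut $I$ and the coded sequence $a\in N$ supplying the levels $a_n$ cofinal in $M$. The only extra care is that each intermediate cut $J_n$ (with $a_n\in J_n$, $a_{n+1}\notin J_n$) is taken to be an \emph{elementary} submodel with the pair recursively saturated, and the extension step (Lemma~\ref{lem_rozszerzenia_ctminus_o_elastyczne_formuly}) is strengthened so that at each stage $(J_n,T_n\cap J_n)\models\CT^-$. Since $M=\bigcup_n J_n$, one gets $(M,T\cap M)\models\CT^-$ in the limit. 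The non-extendability of $T$ to a full $\CT^-$ on $N$ is then literally the same rank argument as in Theorem~\ref{tw_ctminus_co_sie_nie_rozszerza}: the $b_n$ (chosen in $M$, hence downward cofinal in $N\setminus\omega$ by end-extension) block any formula of rank $\infty$. Smith's surjection obstruction does not enter.
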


The difference between this theorem and the previous one is that now we explicitly require that $(M,T \cap M) \models \CT^-$. This means in particular that any existential formula from $M$ which is rendered true by the predicate $T$ must have a witness already in $M$. Note that considering the special case of standard formulae with nonstandard numerals denoting elements from $M$, we can conclude that $M$ is an elementary submodel of $N$. Since the proof of Theorem \ref{tw_ctminus_co_sie_nie_rozszerza_z_podmodelu} is a modification of the proof of Theorem \ref{tw_ctminus_co_sie_nie_rozszerza}, we will only briefly comment on what needs to be changed.

 Incidentally, Theorem \ref{tw_ctminus_co_sie_nie_rozszerza} holds for an arbitrary cut $I \subsetneq M$, but for rather uninteresting reasons. The way we defined it, if $I \subset J$ are two cuts and $(M,T) \models \CT^- \res J$, then also $(M,T) \models \CT^- \res I$. Therefore, we could take an arbitrary cut $I$, find a bigger cut $J$ with a coded cofinal $\omega$-sequence, and apply Theorem \ref{tw_ctminus_co_sie_nie_rozszerza} in its current version.\footnote{We are grateful to Jim Schmerl for this remark.}

Regarding Theorem \ref{tw_ctminus_co_sie_nie_rozszerza_z_podmodelu}, notice that if $N \models \PA$ is recursively saturated, then arbitrarily high we can find cuts satisfying the assumptions of the theorem, i.e. cuts $M$ such that $M \preceq_e N$ and $M$ has a cofinal sequence of length $\omega$ coded in $N$. Indeed, take any $a \in N$ and construct a series of elements $(a_n)_{n \in \omega}$ such that $a_0 = a$ and for any $n$, the element $a_{n+1}$ dominates all functions arithmetically definable with parametres less or equal to $a_n$. That such an element exists follows from recursive saturation. Then, $M = \set{x \in N}{\exists n \in \omega \ \ x<a_n}$ is an elementary submodel of $N$. Moreover, it can be easily verified that $M$ has to be recursively saturated itself.

The proof of Theorem \ref{tw_ctminus_co_sie_nie_rozszerza} relies on the following lemma. In the lemma we will use certain formulas $\eta_b$. For  $b \in M$, let  $\eta_b$ be
\begin{displaymath}
\exists x_0 \ldots \exists x_b v=v \wedge \bigwedge_{i=0}^b x_i = x_i.
\end{displaymath}
 Notice that $\sd(\eta_b) = 2b +2$, which will be handy in the proof of the lemma. We also introduce the following notation: if $M \models \PA$, $I \subset  M$ is an initial segment, and $T \subset \Sent_{\PA}(M)$, then by $T \res I$ we mean $\set{x \in \Sent_{\PA}(M)}{T(x) \wedge \sd(x) \in I}$. 

\begin{lemma} \label{lem_rozszerzenia_ctminus_o_elastyczne_formuly}
	Let $(M,T,J) \models \CT^- \res J$ be countable and recursively saturated as a model in the expanded language. Let $A \subset M$ be any set such that $(M,T,A,J)$ is recursively saturated. Then, for any $b \notin J$, there exists $T' \supset T \res J$ such that $(M,T') \models \CT^-$ and the formula $T'\eta_b(\num{v})$ defines $A$.
\end{lemma}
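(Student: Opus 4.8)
The plan is to build $T'$ by a back-and-forth / resplendency argument, using the recursive saturation of $(M,T,A,J)$ to meet, one at a time, the countably many ``local'' compositional requirements that $T'$ must satisfy, while forcing $\eta_b$ to capture exactly $A$. The key point is that $\CT^-$ is a recursively axiomatised theory, so ``$(M,T') \models \CT^-$ and $T'\eta_b(\num{v})$ defines $A$'' can be written as a recursive set of conditions on the new predicate $T'$; since $M$ is countable and recursively saturated in the relevant expanded language, a predicate satisfying all these conditions can be found provided the conditions are finitely satisfiable in $(M,T,A,J)$ together with a fresh predicate symbol. This is the standard Enayat--Visser style of argument for producing satisfaction classes, and I would cite \cite{enayatvisser} for the machinery; the novelty is only in checking that the extra demands about $\eta_b$ and $A$ do not spoil finite satisfiability.

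First I would fix, once and for all, the requirement $\forall v\,(T'\eta_b(\num v) \equiv v \in A)$: because $\eta_b$ is (provably, from $\sd(\eta_b)=2b+2 \notin J$) outside the fragment on which $T$ is already committed, and $\eta_b$ is logically equivalent to $v=v$, nothing in $\CT^- \res J$ constrains the truth value of instances of $T\eta_b(\num v)$, so we are free to stipulate these values to code $A$. Second, I would set up the Enayat--Visser argument relative to the base theory $\CT^- \res J$: one wants a truth predicate extending $T$, compositional on \emph{all} of $\Sent_{\PA}$, with no coherence condition beyond those of $\CT^-$ plus the $\eta_b$-stipulation. The crucial finite-satisfiability lemma is that for any standard finite fragment of the compositional axioms and any finite set of instances of the $\eta_b$-condition, one can find a finite (coded) partial truth assignment on $M$ extending the relevant restriction of $T$ and satisfying that fragment: this is exactly where recursive saturation of $(M,T,A,J)$ is used, to realise the type expressing ``here is a finite consistent piece of such a $T'$.'' The compatibility of the $\eta_b$-stipulation with this step is guaranteed because $\eta_b$ never occurs as an immediate subformula of any sentence on which the compositional axioms impose a link to smaller formulae in $J$; disjunctions or negations built over $\eta_b$ land outside $J$ as well, and for those the compositional clauses are genuine \emph{defining} equations for $T'$ rather than constraints, so they can always be honoured.

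The main obstacle I expect is precisely the bookkeeping in that finite-satisfiability step: one must show that an arbitrary finite fragment of $\CT^-$, restricted to finitely many sentences of $M$, can be satisfied by a finite coded object over $(M,T,J)$ in a way that is \emph{consistent with} the already-fixed values on $\CT^- \res J$ sentences and with the $\eta_b$-stipulation. The delicate case is a sentence $\sigma$ whose syntactic depth is a nonstandard element \emph{below} some $a_n$ (hence in $J$) but which has $\eta_b$ buried inside it after substitution of terms — here one has to argue that such $\sigma$ cannot arise, or that the regularity axiom plus the depth computation $\sd(\eta_b) = 2b+2$ keeps $\eta_b$-containing sentences out of $J$; this is the reason the lemma records the syntactic depth of $\eta_b$ explicitly. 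Once finite satisfiability is in hand, a routine compactness-for-types argument inside the countable recursively saturated structure produces the desired $T'$, and the verification that $(M,T') \models \CT^-$ and that $T'\eta_b(\num v)$ defines $A$ is immediate from the construction.
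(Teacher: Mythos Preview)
Your plan is the paper's: modify the Enayat--Visser construction so that the stipulation $T'\eta_b(\num v)\equiv v\in A$ is carried along, and then invoke resplendency of $(M,T,J,A)$ to realise the resulting predicate inside $M$. Two places where your sketch should be tightened.

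First, the role of recursive saturation. In the paper recursive saturation is used only once, at the very end, via resplendency: one first builds an $\omega$-chain of elementary extensions $(M_n,T_n,J_n,A_n,S_n)$ in the usual Enayat--Visser way, takes the union, and then pulls the resulting full satisfaction class back to $M$. The finite-satisfiability step is \emph{not} a type-realisation in $(M,T,A,J)$; it is an explicit combinatorial construction (the paper isolates it as a separate consistency lemma): given finitely many compositional and preservation conditions, one defines $S_{n+1}$ by hand on equivalence classes of pairs $(\phi,\alpha)$ modulo term-substitution, by induction on the finite partial order of direct subformulae appearing in those conditions. Your phrase ``realise the type expressing `here is a finite consistent piece'\,'' conflates these two stages.

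Second, the ``delicate case'' you flag cannot occur: term substitution does not change $\sd$, and any formula having $\eta_b$ as a subformula has $\sd\geq 2b+2\notin J$, so $\eta_b$ is never buried inside a $J$-formula. The genuine delicate point is the opposite direction: the compositional clause for $\eta_b$ itself links $T'\eta_b$ to $T'$ of its \emph{direct subformula} $\exists x_2\ldots\exists x_b\,v=v\wedge\bigwedge x_i=x_i$, and inductively to all the finite-depth subformulae of $\eta_b$. These are also outside $J$, so $T$ says nothing about them, but you must assign them values compatible both with the compositional clauses and with the stipulation on $\eta_b$. The paper does this in the base step of the consistency lemma by declaring, for every such subformula $\phi$ and every valuation $\alpha$, that $(\phi,\alpha)\in S_1$ iff $\alpha(v)\in A$; i.e.\ the bound variables $x_1,\ldots,x_b$ are simply ignored. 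That this is compositionally consistent is what the shape of $\eta_b$ is designed to guarantee.
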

 
Since the proof of the lemma is a modification  of the Enayat--Visser conservativity proof for $\CT^-$, we move it to the appendix.

\begin{proof}[Proof of Theorem \ref{tw_ctminus_co_sie_nie_rozszerza}]
	Let $M$ be a countable recursively saturated model of $\PA$. Let $a \in M$, let $I = \set{x \in M}{\exists n \in \omega \ x< a_n}$, and let $(b_n)_{n < \omega}$ be a decreasing sequence  such that 
	\begin{displaymath}
	\set{x  \in M}{\forall n \ x<b_n} = \omega.
	\end{displaymath}
	
	We construct the predicate $T$ by recursion. Let $T'_0$ be any truth predicate such that $(M,T'_0) \models \CT^-$ is recursively saturated and
	\begin{displaymath}
	(M,T'_0) \models \forall x \ \ \big( T'_0 \eta_{a_0}(\num{x}) \equiv x=b_0 \big)
	\end{displaymath}
	Let $T_0$ be $T'_0$ restricted to formulae in $J_0$ where $a_0 \in J_0$, $a_1  \notin J_0$, and such that $(M,T_0, J_0)$ is recursively saturated. 
	
	Suppose that $T_n$ is a truth predicate such that $(M,T_n,J_n) \models \CT^- \res J_n$ where $a_n \in J_n, a_{n+1} \notin J_n$, $(M,T_n, J_n)$ is recursively saturated, and for all $i \leq n$,
	\begin{displaymath}
	(M,T_n) \models \forall x \ \ \big( T_{n} \eta_{a_{i}}(\num{x}) \equiv x=b_i \big).
	\end{displaymath}
	
	Using Lemma \ref{lem_rozszerzenia_ctminus_o_elastyczne_formuly}, we find $T_n \subset T_{n+1}' \subset M$ such that $(M,T_{n+1}') \models \CT^-$ is a recursively saturated model such that
	\begin{displaymath}
	(M,T'_{n+1}) \models \forall x \ \ \big( T'_{n+1} \eta_{a_{n+1}}(\num{x}) \equiv x=b_{n+1} \big).
	\end{displaymath}
	We set $T_{n+1} = T'_{n+1} \res J_{n+1}$, where $a_{n+1} \in J_{n+1}, a_{n+2} \notin J_{n+1}$ and $(M,T_{n+1},J_{n+1})$ is recursively saturated. One readily checks that $T_{n+1}$ satisfies our inductive conditions.

	Finally, we set $T = \bigcup_{i \in \omega} T_i$. Then 
	\begin{displaymath}
	(M,T) \models \CT^- \res I
	\end{displaymath}
	and the formulae $T \eta_{a_n}(\num{x})$ define the elements $b_n$.

	Now we can use the machinery of disjunctions with stopping conditions to show that $T$ cannot be extended to $T'$ such that $(M,T') \models \CT^-$. Suppose towards contradiction that such a $T'$ can be found. Again, we introduce a suitable notion of rank. For $\phi \in \Form^{\leq 1}_{\PA}(M)$, let 
	 \begin{displaymath}
	 r(\phi) = \left\{ \begin{array}{ll}
	 - \infty, & \textnormal{if } (M,T) \models \neg \exists x \ \  T \phi(x) \vee \forall x \big(T \phi(\num{x}) \rightarrow x>b_0\big). \\
	 n, &  \textnormal{if } (M,T) \models \exists x \ \ T \phi(x) \textnormal{ and } n \in \omega \textnormal{ is the greatest such that } \\
	 & (M,T) \models \forall x \Big(T \phi(\num{x}) \rightarrow x \geq n \wedge x \leq b_n \Big) \\
	 \infty, & \textnormal{if }  (M,T) \models \exists x \ \ T \phi(x) \textnormal{ and } \\
	 & \textnormal{for all } n \in \omega (M,T) \models \forall x \Big( T \phi(\num{x}) \rightarrow x \geq n \wedge x \leq  b_n \Big) \\
	 \end{array}  \right.
	 \end{displaymath}
	 
	Since $(b_n)$ is downwards cofinal in $M \setminus \omega$, one can readily see that there are no formulae of rank $\infty$, because an element defined with such a formula necessarily would have to be between $\omega$ and all elements $b_n$. Notice that for any formula $\phi$, we can in fact find a \emph{coded} sequence of sentences $\alpha_i[\phi]$ such that 
	\begin{displaymath}
	(M,T) \models T \alpha_i[\phi] \textnormal{ iff } r(\phi) < n.
	\end{displaymath}
	Namely, we set:
	\begin{eqnarray*}
	\alpha_0[\phi] & := & \neg \exists x \    \phi(x) \vee \forall x, y \big( \phi(x) \wedge \eta_{a_0}(y) \rightarrow x>y\big) \\
	\alpha_n[\phi] & := & \exists x,y \big( \phi(x) \wedge   \eta_{a_n}(y) 
	\wedge (x < n \vee x > y) \big).
	\end{eqnarray*}

	Using Lemma \ref{lem_dobre_porzadki_w_M}, it is enough to find a coded sequence of sentences growing in the rank. Fix any $c$ smaller than the length of $a$ as a sequence (where $a$ is the coded sequence that we have fixed in the construction of our predicate $T$) and let
	\begin{eqnarray*}
	\gamma_0(x) & : = & (x=x) \\
	\gamma_{j+1}(x) & : = & \bigvee_{i=0}^{c, \alpha[\gamma_j]} \eta_{a_{i+1}}(x).
	\end{eqnarray*}
	We claim that for all $d < a$ either $r(\gamma_d) = \infty$ or $r(\gamma_{d+1}) > r(\gamma_d)$.
	
	Fix any $d$ and suppose that $r(\gamma_d) = - \infty$ or $r(\gamma_d) = n \in \omega$. Then by Theorem \ref{tw_alternatywy_z_warunkiem_stopu}
	\begin{displaymath}
	(M,T') \models \forall x \big( T \gamma_{d+1}(\num{x}) \equiv T \eta_{a_{k}}(\num{x}) \big)
	\end{displaymath}
	where $k = 0$ if $r(\gamma_d) = - \infty$ or $k= n+1$ if $r(\gamma_c) = n \in \omega$. The rank of the formula $\eta_{a_{k}}$ is greater than $r(\gamma_d)$, since $\eta_{a_k}$ defines the element $b_k$ and the sequence $(b_n)$ is decreasing. Now, as in proofs of Theorems \ref{tw_lachlana} and \ref{th_ctminus_definiuje_utb}, Lemma \ref{lem_dobre_porzadki_w_M} for $f(x) = r(\gamma_x)$ would imply that there exists a formula $\gamma$ with rank equal to $\infty$, and, as we have already noted, such a formula cannot exist. 
\end{proof}

Now let us comment on the modifications to the above construction needed in order to prove Theorem \ref{tw_ctminus_co_sie_nie_rozszerza_z_podmodelu}. The crucial problem is that the constructed truth predicate restricted to $I$ now needs to be a model of $\CT^-$ itself. In order to achieve this, we can take every $J_n$ to be an elementary submodel of $M$ such that $(M,J_n)$ is recursively saturated. We additionally require that each $T_n$ has the additional property that $(J_n,T_n \cap J_n) \models \CT^-$. This can be proved similarly to Lemma \ref{lem_rozszerzenia_ctminus_o_elastyczne_formuly}, but we skip the unenlightening details. 

\begin{remark} \label{rem_dodanie_SRP}
	Theorems \ref{tw_ctminus_co_sie_nie_rozszerza} and \ref{tw_ctminus_co_sie_nie_rozszerza_z_podmodelu} remain true if we additionally require that the  truth predicate $T$ satisfies the structural regularity property $\SRP$. The proof is entirely analogous, since one can show a modified version of Lemma \ref{lem_rozszerzenia_ctminus_o_elastyczne_formuly} in which both the initial truth predicate $T$ and the constructed truth predicate $T'$ are required to satisfy $\SRP$.
\end{remark}

\section{Appendix}

In this section, we prove Lemma \ref{lem_rozszerzenia_ctminus_o_elastyczne_formuly}. Let us restate it, for the convenience of the reader:
\begin{lemma*}
	Let $(M,T,J) \models \CT^- \res J$ be countable and recursively saturated as a model in the expanded language. Let $A \subset M$ be any set such that $(M,T,A,J)$ is recursively saturated. Then, for any $b \notin J$, there exists $T' \supset T \res J$ such that $(M,T') \models \CT^-$ and the formula $T'\eta_b(\num{v})$ defines $A$.
\end{lemma*}
 Its proof is a  modification of the construction by Enayat and Visser from \cite{enayatvisser}. 

The lemma is a strengthening of a result by Smith (\cite{smith}, Theorem 3.3) who showed that any $A \subseteq M$ such that $(M,A)$ is recursively saturated may be defined with a nonstandard formula. In the above Lemma, we additionally require that we may arbitrarily fix this truth predicate on any given cut.

\begin{proof}[Proof of Lemma \ref{lem_rozszerzenia_ctminus_o_elastyczne_formuly}]
Recall that $\eta_b$ was defined as:
\begin{displaymath}
\exists x_0 \ldots \exists x_b \ v=v \wedge \bigwedge_{i=0}^b x_i = x_i.
\end{displaymath}

	Fix $(M,T, J, A)$ as in the assumptions of the lemma. We first show that there exists an extension 
	\begin{displaymath}
	(M,T,J,A) \preceq (M',T',J',A')
	\end{displaymath}
	 and $T''\subset M'$ such that
	\begin{itemize}
		\item $(M',T'') \models \CT^-$;
		\item $(M',T'',A') \models \forall x \ \ x\in A' \equiv T'' \eta_b(\num{x})$;
		\item $T' \res J' \subset T''$. 
	\end{itemize}
	By resplendency of $(M,T,A)$, this will conclude our proof. 
	
	In order to construct $(M',T',J',A',T'')$, we build an auxiliary chain of models: $(M_n,T_n,J_n,A_n,S_n)$ of length $\omega$ such that $T_n$ and $S_n$ are binary relations (we replace truth predicates with satisfaction predicates), $J_n$ is a cut, and $A_n\subseteq M_n$.	We assume for convenience that $T \res J = T$, i.e., $T$ is only defined on formulae whose depth is in $J$.
	
	We define $A_0$ as $A$, $M_0$ as $M$, $J_0$ as $J$. $S_0$ is the empty set, and $T_0$ is a partial satisfaction predicate defined so that $T_0(\phi,\alpha)$ holds for $\phi \in \Form_{\PA}(M_0)$, $\alpha \in \Val(\phi)$ if $T(\phi[\alpha])$ holds, where $\phi[\alpha]$ is obtained from $\phi$ by substituting $\num{\alpha(v)}$ for every $v \in \FV(\phi)$; i.e.~we take a free variable $v$ in $\phi$, see what its value is under $\alpha$, we take the canonical numeral denoting this value, and we substitute it for $v$. Similarly, if $t \in \Term_{\PA}$, and $\alpha$ is a valuation defined on its free variables, then by $t[\alpha]$ we mean the value of the term $t$ with numerals $\num{\alpha(v)}$ substituted for free variables $v$ in the term $t$. If $\alpha, \beta$ are valuations and $v$ is a variable, we denote by $\alpha \sim_v \beta$ that $\alpha$ and $\beta$ are identical, possibly except for the value on the variable $v$ (which is in particular allowed not to be an element of  the domain of $\beta$).
	
	We inductively construct a chain of countable models $(M_n,T_n, J_n, A_n, S_n)$ of length $\omega$. Suppose that we have already defined the $n$-th model in the chain. Then we define $(M_{n+1}, T_{n+1}, J_{n+1}, A_{n+1}, S_{n+1})$ as any model of the theory $\Theta_{n}$ with the following axioms:
	\begin{itemize}
		\item The elementary diagram $\ElDiag(M_n,T_n, J_n, A_n)$ (with symbols $A_{n}, T_{n}, J_n$ replaced with $A_{n+1}$, $T_{n+1}$, $J_{n+1}$, respectively). 
		\item The compositionality scheme $\Comp_n(\phi)$, for $\phi \in \Form_{\PA}(M_n)$, to be defined later.
		\item The regularity axiom I: $\forall \phi \in \Form_{\PA}, \alpha \in \Val (\phi) \ \ S_{n+1}(\phi,\alpha) \equiv S_{n+1}(\phi[\alpha],\emptyset)$.
		\item The regularity axiom II: $\forall \phi \in \Form_{\PA} \forall \bar{s}, \bar{t} \in \ClTermSeq_{\PA} \ \ \bar{\val{s}} = \bar{\val{t}} \rightarrow S_{n+1} (\phi(\bar{s}), \emptyset) \equiv S_{n+1} (\phi(\bar{t}), \emptyset)$.
		\item $\forall \phi \in \Form_{\PA} \forall \alpha \in \Val(\phi) \ \ T_{n+1}(\phi, \alpha) \rightarrow S_{n+1}(\phi, \alpha)$.
		\item $\forall x \ \ x \in A_{n+1} \equiv S_{n+1}(\eta_b(\num{x}),\emptyset)$.
		\item An additional preservation condition for $n>0$: $S_{n+1}(\phi,\alpha)$ for all $\phi \in \Form_{\PA}(M_{n-1}), \alpha \in \Val(\phi) \in M_n$ such that $S_n(\phi,\alpha)$ holds. (By convention, we set $M_{-1} = \emptyset$.)
	\end{itemize}

	Finally, an instance of the compositionality scheme $\Comp_n(\phi)$ is defined as the conjunction of the following axioms:
	\begin{itemize}
		\item $\forall s,t \in \Term_{\PA} \forall \alpha \in \Val(\phi) \ \Big( \phi = (s=t) \rightarrow S_{n+1}(\phi, \alpha) \equiv s[\alpha]=t[\alpha]$ \Big).
		\item $\forall \psi \in \Form_{\PA} \forall \alpha \in \Val(\phi) \ \ \Big( \phi = \neg \psi \rightarrow S_{n+1}(\phi,\alpha) \equiv \neg S_{n+1} (\psi,\alpha) \Big)$. 
		\item $\forall \psi, \eta \in \Form_{\PA} \forall \alpha \in \Val(\phi) \ \ \Big( \phi = (\psi \vee \eta) \rightarrow S_{n+1}(\phi,\alpha) \equiv S_{n+1}(\psi,\alpha) \vee S_{n+1}(\eta,\alpha) \Big)$.
		\item $\forall v \in \Var, \psi \in \Form_{\PA} \forall \alpha \in \Val(\phi) \ \ \Big( \phi = (\exists v \psi) \rightarrow S_{n+1}(\phi, \alpha) \equiv \exists \alpha' \sim_{v} \alpha \ \  S_{n+1}(\psi, \alpha') \Big)$.
	\end{itemize}

Let us assume that $\Theta_n$ is consistent. We will actually prove it later. Assuming that the construction works (i.e., all the models $(M_n,T_n,J_n,A_n,S_n)$ exist), we define:
\begin{itemize}
\item $M' = \bigcup M_n$.
\item $T' = \set{\phi \in \Sent_{\PA}(M')}{(\phi,\emptyset) \in \bigcup T_n }.$
\item $J' = \bigcup J_n$.
\item $A' = \bigcup A_n$.
\item $T'' = \set{\phi \in \Sent_{\PA}(M')}{\exists n \ \phi \in \Sent_{\PA} (M_n) \wedge (\phi, \emptyset) \in S_{n+1}}.$
\end{itemize} 
We claim that $(M',T',J',A',T'')$ satisfies the conditions listed at the beginning of our proof. Let us check it. 

The elementarity of the extension   $(M,T,J,A) \preceq (M',T',J',A')$ follows from the fact that every extension in the constructed chain was elementary in this restricted language. The containment $T' \subseteq T''$ also follows from the fact that the containment holds at every step of our construction. 

Let us now observe that if $\phi \in M_n, \alpha \in M_{n+1}$, and $(\phi,\alpha) \notin S_{n+1}$, then $(\phi, \alpha) \notin S_l$ for $l \geq n+1$. Indeed, if $(\phi, \alpha) \notin S_{n+1}$, then by compositional conditions $(\neg \phi, \alpha) \in S_{n+1}$ and, consequently $(\neg \phi, \alpha) \in S_l$ which, again by compositional axioms, implies $(\phi, \alpha) \notin S_l$. The equivalence  
\begin{displaymath}
 \forall x \ \ A_n(x) \equiv  S_n(\eta_b(\num{x}), \emptyset)
\end{displaymath}
also holds for every $n>0$. The predicates $A_n$ extend each other elementarily. This guarantees that $\eta_b$ defines the set $A'$ in the model $(M',T'')$. Now it suffices to check that $(M',T'') \models \CT^-$. 

Let us fix any $\phi \in M'$. We prove compositionality by cases considering various possible syntactic forms of $\phi$. Let us consider for example the case when $\phi = \exists v \psi(v)$. (We omit the other cases which follow by similar, simpler arguments.) Fix the least $n$ such that $\phi \in M_n$. Suppose that $\phi \in T''$.  By definition, this means that $(\exists v \psi, \emptyset) \in S_{n+1}$. By compositional conditions, there is a valuation $\alpha$ defined on the variable $v$ such that $(\psi, \alpha) \in S_{n+1}$ and, by the regularity axiom I, $(\psi[\alpha], \emptyset) \in S_{n+1}$ as well. Fix any variable $w$ which does not occur in $\psi$ such that it minimises $k$ for which $\psi' := \psi[w/v] \in M_k$. Let $\beta$ be a valuation defined only on $w$ such that $\beta(w) = \alpha(v)$. Then, by the regularity axiom I, $(\psi', \beta) \in S_{n+1}$, which implies $(\exists w \psi', \emptyset) \in S_{n+1}$. Finally, by the remark in the previous paragraph, this gives us $(\exists w \psi', \emptyset) \in S_{k+1}$, and consequently $(\psi', \gamma) \in S_{k+1}$ for some valuation $\gamma$ defined only on $w$. Then, again using the regularity axiom I, we conclude that $(\psi'[\gamma],\emptyset) \in S_{k+1}$ and $(\psi'[\gamma], \emptyset) \in S_{k+2}$. Since $\psi'[\gamma] = \psi[\gamma] = \psi(\num{x})$ for some $x$ from $M_{k}$ or $M_{k+1}$, we conclude that $\psi(\num{x}) \in T''$.

Conversely, suppose that $\phi(\num{x}) \in T''$ which means that $\phi(\num{x}) \in S_{n+1}$, where $n$ is the least such that $\phi(\num{x}) \in T''$. By regularity and compositional axioms this implies that we have $(\exists v \phi, \emptyset) \in S_{n+1}$. Then $(\exists v \phi, \emptyset) \in S_{k+1}$ where $k$ is the least such that $\exists v \phi \in M_k$ which again implies that $\exists v \phi \in T''$.

The regularity axiom of $\CT^-$ follows from the regularity axiom II in the above construction. This ends the proof modulo the consistency of the theory $\Theta_n$ which we prove in a separate lemma.
\end{proof}

\begin{lemma} \label{lemat_theta_n_sa_niesprzeczne}
	The theories $\Theta_n$ defined above are consistent. 
\end{lemma}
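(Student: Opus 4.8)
The plan is to prove consistency of $\Theta_n$ by constructing a model, working by induction on $n$ and using the countable recursive saturation that is available at each stage. The key point is that $\Theta_n$ is an extension of $\ElDiag(M_n,T_n,J_n,A_n)$ by a compositional satisfaction predicate $S_{n+1}$ together with the regularity axioms, the containment $T_{n+1}\subseteq S_{n+1}$, the defining condition for $A_{n+1}$ via $\eta_b$, and (for $n>0$) a preservation condition forcing $S_{n+1}$ to extend $S_n$ on formulas from $M_{n-1}$. Since the theory only speaks about one new predicate $S_{n+1}$ over a fixed (elementary extension of a) model, by a standard compactness argument it suffices to show that every finite fragment is satisfiable; equivalently, for each finite set $F$ of formulas of $M_n$ one can define a satisfaction predicate on $F$ (and its subformulas, and the relevant substitution instances $\eta_b(\num{x})$) that is compositional, regular, extends $T_n$ on $F$, and is compatible with $S_n$.

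First I would make the compactness reduction explicit: a counterexample to consistency would, by compactness, involve finitely many instances of $\Comp_n(\phi)$, the regularity axioms (which are single sentences once one notes they are $\Pi_1$ over the diagram, or can be localised to the finitely many formulas in play), the $T_{n+1}\subseteq S_{n+1}$ scheme restricted to finitely many $(\phi,\alpha)$, finitely many instances of the $A_{n+1}$-defining biconditional, and finitely many instances of the preservation condition. All of these involve only finitely many formulas from $M_n$ (and from $M_{n-1}$), so they are witnessed inside a finitely generated, hence coded, fragment of the syntax as seen in $M_n$. The task becomes: inside $M_n$ (or a suitable elementary extension), produce an actual set $S\subseteq \Form_{\PA}(M_n)\times\Val$ satisfying the finitely many required conditions. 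This is exactly the kind of statement that follows from recursive saturation of $(M_n,T_n,J_n,A_n)$: one writes down the type expressing ``there is a coded partial satisfaction class on this finite piece of syntax, compositional and regular, agreeing with $T_n$ and with $S_n$, and defining $A_n$ on $\eta_b$,'' shows it is finitely realised (by an overspill/Tarski-style truth definition for the bounded-depth formulas actually involved, since only finitely many standard formula-schemata occur), and invokes saturation.

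The main obstacle — and the step deserving the most care — is handling the interaction between the preservation condition (that $S_{n+1}$ must extend $S_n$ on $M_{n-1}$-formulas) and compositionality. Here one needs that $S_n$, restricted to $M_{n-1}$-formulas, is itself already compositional and regular, so that extending it compositionally to a larger (but still finite, at the finite-fragment level) set of $M_n$-formulas creates no conflict; this is why the chain is set up with the preservation axioms from the start, and it is where the induction hypothesis ``$\Theta_{n-1}$ is consistent and its model $(M_n,T_n,J_n,A_n,S_n)$ has $S_n$ compositional on $M_{n-1}$'' gets used. One must also check that the requirement ``$x\in A_{n+1}\equiv S_{n+1}(\eta_b(\num x),\emptyset)$'' is consistent with $T_{n+1}\subseteq S_{n+1}$: this uses that $b\notin J$, so $\eta_b$ is a nonstandard formula not constrained by $T$ (which lives on $J$-formulas), leaving us free to set its satisfaction value on each $\num{x}$ to match membership in $A$. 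I would close by remarking that because $(M,T,A,J)$ — and each constructed stage — is countable and recursively saturated, the chain of countable models can actually be built, and then resplendency of $(M,T,A)$ (used back in the main proof) lets us pull the construction back onto $M$ itself. The detailed verification that the finite-fragment type is finitely realised is the only genuinely technical piece, and it is a routine adaptation of the Enayat--Visser argument, so I would present it compactly rather than in full.
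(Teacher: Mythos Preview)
Your proposal contains a genuine gap at the central step. After the compactness reduction you correctly arrive at the task of interpreting $S_{n+1}$ in $(M_n,T_n,J_n,A_n)$ so that the finitely many compositional, regularity, preservation and $\eta_b$-conditions from $\Gamma$ hold. But you then try to discharge this by invoking recursive saturation of $(M_n,T_n,J_n,A_n)$ and realising a type asserting the existence of a \emph{coded} partial satisfaction class, with finite realisability coming from ``a Tarski-style truth definition for the bounded-depth formulas actually involved.'' Two things go wrong here. First, nothing in the construction guarantees that $(M_n,T_n,J_n,A_n)$ is recursively saturated for $n>0$: the models in the chain are taken as arbitrary models of $\Theta_{n-1}$, and the lemma is exactly what is needed to show such models exist. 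Second, and more importantly, the finitely many $\phi$ appearing in $\Comp_n(\phi)$ and in the preservation clauses are in general nonstandard elements of $\Form_{\PA}(M_n)$; they have nonstandard syntactic depth, so no standard Tarski truth definition (and no overspill from standard depth) will produce a satisfaction predicate on them. You are asking saturation to deliver precisely the object whose existence is the point of the lemma, and the finite-realisability check you propose does not go through for nonstandard $\phi$.

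The paper's argument bypasses saturation entirely and constructs $S_{n+1}$ \emph{externally} (not as a coded object) over the fixed model $(M_n,T_n,J_n,A_n)$. The key device is an equivalence relation $\sim$ on pairs $(\phi,\alpha)$ identifying those that agree after substituting terms with equal values; one takes the finite set $\Delta$ of $\sim$-classes of formulas occurring in $\Gamma$, orders it by the direct-subformula relation $\unlhd$, and defines $S_{n+1}$ by (external, standard) induction along this finite partial order: minimal classes with an $M_{n-1}$-representative inherit their value from $S_n$, other minimal classes are set to empty, the classes of $\eta_b$ and its finitely many top subformulas are set according to $A$, and non-minimal classes are determined compositionally. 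The regularity conditions are then verified by induction on $\unlhd$. This explicit hand construction is the real content of the Enayat--Visser style consistency proof, and it is what your proposal is missing.
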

\begin{proof}[Sketch of the proof]
	We prove the claim by induction on $n$. Since the induction step and the initial step are essentially the same, we assume that $n>0$. There is only one additional thing which needs to be taken care of in the initial step and we will point it out in the construction. Suppose that $(M_n,T_n, J_n, S_n)$ satisfies $\Theta_{n-1}$. Notice that compositionality and preservation conditions are given by schemes:
	\begin{itemize}
		\item $\Comp_n(\phi)$, for $\phi \in \Form_{\PA}(M_n)$.
		\item $S_{n+1}(\phi,\alpha)$ for all $\phi \in M_{n-1}, \alpha  \in M_n$ such that $S_n(\phi,\alpha)$ holds.
	\end{itemize}
	  To prove the consistency of $\Theta_{n}$, take any finite $\Gamma \subset \Theta_{n}$. We want to interpret $S_{n+1}$ in the model $(M_n,T_n, J_n,A_n)$ so that it satisfies the finitely many compositional and preservation conditions from $\Gamma$. We will introduce an equivalence relation $\approx$ defined as follows for arithmetical formulae $\phi, \psi \in M_n$ and $\alpha \in \Val(\phi), \beta \in \Val(\psi)$:
	  \begin{displaymath}
	  (\phi,\alpha) \approx (\psi,\beta)
	  \end{displaymath}
	  if $\phi[\alpha]$ and $\psi[\beta]$ differ only by substituting a sequence of terms with equal values, i.e. there exists a formula $\xi \in M_n$ and sequences $\bar{t}, \bar{s} \in M_n$ of closed terms with $\bar{\val{s}}=\bar{\val{t}}$ such that $\phi[\alpha] = \xi(\bar{s})$ and $\psi[\beta] = \xi(\bar{t})$. For instance:
	  \begin{displaymath}
	  (\exists x \ x + (1\times1 +1)  = y, \alpha) \approx (\exists x \ x+ 2 \times z = u+1, \beta),
	  \end{displaymath}
	  where $\alpha(y) = 4, \beta(z) = 1, \beta(u )= 3$. 
	  
	  We also define a relation $\phi \approx \psi$ on formulae which holds if they are essentially the same up to substitution of terms. More precisely, for any formula $\phi$, define its \df{term trivialisation} $\widetilde{\phi}$ as the formula with smallest code such that
	  \begin{itemize}
	  	\item No constant symbol occurs in $\widetilde{\phi}$.
	  	\item No compound terms containing free variables occur in $\widetilde{\phi}$.
	  	\item No free variable occurs in $\widetilde{\phi}$ more than once.
	  	\item The formula $\phi$ can be obtained from $\widetilde{\phi}$ by substituting terms in such a way that variables in substituted terms will remain free after substitution. 
	  \end{itemize}
	   
	  For instance, if $\phi = \exists x (x+2 = 2 \times ((0 \times y) + S(0 +x)) + (y+z))$, then $\widetilde{\phi}$ is the following formula:
	  \begin{displaymath}
	  \exists x (x + v_0 = v_1 \times (v_2 + S(v_3 + x)) + v_4),
	  \end{displaymath}
	  where $v_i$'s as chosen so as to avoid clashes and assure minimality of $\widetilde{\phi}$. Observe that $\widetilde{\phi}$ is universal in the sense that if $\phi = \xi (\bar{t})$ for some $\bar{t} \in \TermSeq_{\PA}$, then $\xi = \widetilde{\phi}(\bar{s})$ for some $\bar{s} \in \TermSeq_{\PA}$. Notice that this property could be used to define term trivialisation.
	  
	   Finally, we say that $\phi \approx \psi$ iff $\phi, \psi$ have the same term trivialisation. The relation $\approx$ is clearly an equivalence relation. Notice that for any $\phi, \psi, \alpha, \beta$ if $(\phi, \alpha) \approx (\psi,\beta)$, then by definition $\phi[\alpha]$ and $\psi[\beta]$ can be obtained by substituting terms in the same formula $\xi$. But then $\xi = \widetilde{\phi} (\bar{s}) = \widetilde{\psi}(\bar{t})$ for some sequences of terms $\bar{s}, \bar{t} \in \TermSeq_{\PA}$ (not necessarily closed) and consequently $\widetilde{\phi} = \widetilde{\xi} = \widetilde{\psi}$.

	  Let $\Delta'$ be the finite set of all formulae which occur in $\Gamma$ under the predicate $S_{n+1}$ either in an instance of the compositionality scheme or the preservation condition. Let $\Delta$ be the set of equivalence classes of formulae from $\Delta'$ under the relation $\approx$:
	  \begin{displaymath}
	  \Delta = \set{[\phi]_{\approx} \in \Form_{\PA}(M_n)/ \approx}{\phi \in \Delta'}.
	  \end{displaymath} 
	  Notice that we can order $\Delta$ by the relation $\unlhd$ such that $[\phi] \unlhd' [ \psi]$ if there exist $\phi'  \in [\phi], \psi' \in [\psi]$ such that $\phi'$ is a direct subformula of $\psi$. Let $\unlhd$ be the transitive closure of $\unlhd'$. Now, we define the predicate $S_{n+1}$ in the following steps:
	  \begin{enumerate}
	  	\item In the first step, we include in $S_{n+1}$ all pairs $(\phi,\alpha)$ from $T_{n}$.
	  	\item For any $[\phi] \in \Delta$ which has nonempty intersection with $M_{n-1}$ and is minimal in the ordering $\unlhd$, we set $(\phi,\alpha) \in S_{n+1} $ iff $(\widetilde{\phi},\beta) \in S_n$ for some $\beta$ such that $(\phi,\alpha) \approx (\widetilde{\phi},\beta)$. Note that all the formulae in $[\phi]$ have the same trivialisation, so by elementarity $\widetilde{\phi} \in M_{n-1}$, since it is definable in a parameter from $M_{n-1}$. 
	  	\item For any $[\phi] \in \Delta$ which has no element in $M_{n-1}$ and is minimal in the ordering $\unlhd$, we do not add any $(\phi,\alpha)$ to $S_{n+1}$. Effectively, $\phi$ defines the empty set under the satisfaction predicate.
	  	\item If $n=0$, for all $\phi \in \Delta'$ which are subformulae of $\eta_b$ located on a (standard) finite depth in the syntactic tree of $\eta_b$ (including $\eta_b$ itself), we set $(\phi,\alpha) \in S_n$ if $A(\eta_b(\num{x}))$ holds where $x = \alpha(v)$. In effect, we decide that the valuations of all variables other than $v$ do not influence the truth value of $\eta_b$. If $n>0$, then $S_{n+1}$ is defined on $\eta_b$ and its direct subformulae by the preservation conditions.
	  	\item We extend the valuation to other classes in $\Delta$ by induction on the finite partial order $\unlhd$ using compositional conditions, e.g. if $S_{n+1}$ is already defined on $\phi$ such that $[\phi] \in \Delta$, then we extend it to $\neg \phi$ with $[\neg \phi] \in \Delta$ so that $(\neg \phi, \alpha) \in S_n$ iff $(\phi,\alpha) \notin S_n$.
	  \end{enumerate}
  	It is clear that the constructed model satisfies the elementary diagram of $(M_n,T_n,J_n,A_n)$.
	Since the predicate $S_{n+1}$ was defined by induction on complexity of formulae according to compositional condition and since every formula has an unambiguous tree of direct subformulae, the compositional conditions are satisfied. The preservation conditions are satisfied since if a formula $\phi$ is an element of $M_{n-1}$, then its direct subformula must be an element of $M_{n-1}$ as well. Since compositional conditions uniquely determine the behaviour of $S_{n+1}$ on a given formula given its behaviour on direct subformulae, $S_{n+1}$ agrees with $S_n$ on every formula in $\Gamma$ which belongs to $M_{n-1}$. It is clear that $T_{n+1} \subseteq S_{n+1}$ and that $\eta_b$ defines exactly the set $A$.
	
	Let us check that the regularity conditions are satisfied. They are clearly satisfied for formulae $\phi$ such that $[\phi] \notin \Delta$. We prove by induction on the height in the order $\unlhd$ in $\Delta$ that for all $[\phi]$ and all $\phi' \in [\phi]$, the regularity conditions are satisfied. The claim clearly holds for all formulae in $[\phi]$, where $[\phi]$ is minimal in the order $\unlhd$ in $\Delta$. Take any class $[\phi] \in \Delta$. We want to check that regularity conditions are satisfied for formulae in $[\phi]$, provided that they are satisfied for their direct subformulae. We prove this claim by cases, considering various possible syntactic shapes of $\phi$. Let us analyse one example. Suppose that $\phi = \exists v \psi$ such that regularity conditions are satisfied for formulae in $[\psi]$.

	We consider the first axiom of regularity. Take any $\alpha \in \Val(\phi)$ and without loss of generality assume that the variable $v$ is not in the domain of $\alpha$. By definition $(\phi, \alpha) \in S_{n+1}$ iff there exists $\alpha' \sim_v \alpha$ such that $(\psi, \alpha') \in S_{n+1}$. Notice that $(\psi, \alpha') \approx (\psi[\alpha],\beta)$, where $\beta$ is any valuation with $\beta(v) = \alpha'(v)$, as $\psi[\alpha]$ is a formula with at most the variable $v$ free and all other variables 'filled in' with $\alpha$.  By induction hypothesis,  $(\psi, \alpha') \in S_{n+1}$ if and only if $(\psi[\alpha],\beta)$ is in $S_{n+1}$. This in turn holds if and only if $(\phi[\alpha], \emptyset) \in S_{n+1}$, again by compositional conditions.
	 
	Now consider the second axiom of regularity. Let $\phi = \exists v \psi$, let $\bar{s}, \bar{t}$ be two coded sequences of closed terms with $\bar{\val{s}} = \bar{\val{t}}$  and suppose that $(\phi(\bar{s}), \emptyset) \in S_{n+1}$. Then there exists $\alpha \sim_{v} \emptyset$ such that $(\psi(\bar{s}), \alpha) \in S_{n+1}$. By assumption $(\psi(\bar{s}),\alpha) \approx (\psi(\bar{t}), \alpha)$, so by induction hypothesis $(\psi(\bar{t}), \alpha)  \in S_{n+1}$, and by compositional conditions $(\phi(\bar{t}), \emptyset) \in S_{n+1}$ as well.
	
	Similarly, the regularity conditions hold for all formulae from the classes in $\Delta$. This shows that the defined model satisfies the finite fragment $\Gamma$ of $\Theta_n$. The consistency of $\Theta_{n}$ follows.
\end{proof}

Let us comment on how to modify the proof of Lemma \ref{lem_rozszerzenia_ctminus_o_elastyczne_formuly} so that the constructed predicate satisfies $\SRP$. Rather than working with the equivalence classes of the $\approx$ relation considered in the proof, we work with a coarser \df{structural similarity} relation $\sim$. We define our satisfaction predicate simultaneously on all $\sim$-equivalent formulae and we require that the constructed satisfaction predicate is compatible with the relation of \df{structural equivalence} defined on pairs of formulae and valuations. Now we will define both relations, but we will first need some additional technical preliminaries.

\begin{definition} \label{def_structural_template}
	Let $\phi$ be an arithmetical formula. We say that $\widehat{\phi}$ is the \df{structural template} of $\phi$, if it is the smallest formula satisfying the following conditions:
	\begin{itemize}
		\item There exists a sequence $\bar{s}$ of terms such that $\phi$ and $\widehat{\phi}(\bar{s})$ differ by renaming bound variables in such a way that distinct variables remain distinct.
		\item Every free variable occurs in $\widehat{\phi}$ at most once. 
		\item No variable occurs in $\widehat{\phi}$ both free and bound.
		\item No closed terms occur in $\widehat{\phi}$. 
		\item No terms occur in $\widehat{\phi}$ whose all variables are free. 
	\end{itemize}	
	If $\phi$ and $\psi$ have the same structural template, we say that they are \df{structurally similar} and denote it with $\phi \sim \psi$. 
\end{definition}

\begin{example}
	For instance, if $\phi$ is 
	\begin{displaymath}
	x=y \wedge \exists x \exists y \bigl( x+ (x \times 0) = (z + S(z)) + y \times y) \bigr), 
	\end{displaymath}
	then its structural template $\widehat{\phi}$ is the following formula:
	\begin{displaymath}
	v_0=v_1 \wedge \exists w_1 \exists w_2 \bigl( w_1+ (w_1 \times v_2) = v_3 + w_2 \times w_2) \bigr), 
	\end{displaymath}
	where $v_i, w_i$ are chosen so as to guarantee minimality. 
\end{example}

\begin{example}
	The following formulae $\phi_1,\phi_2$ are structurally similar:
	\begin{eqnarray*}
		\phi_1 & = & \forall x \exists y  \big( x+y = S(0) \times z )\big) \\
		\phi_2 & = & \forall w \exists u \big( w+u = S(x + 0) )\big).
	\end{eqnarray*}
\end{example}

Finally, we can define the structural equivalence relation. We say that for $\phi,\psi \in \Form_{\PA}$, $\alpha \in \Val(\phi), \beta \in \Val(\psi)$, the pairs $(\phi,\alpha)$ and $(\psi, \beta)$ are \df{structurally equivalent} if  
\begin{displaymath}
\phi[\alpha] \simeq \psi[\beta]
\end{displaymath}
in the sense of Definition \ref{def_structural_equivalence}. In the construction of a satisfaction predicate satisfying $\SRP$, we require that $S(\phi,\alpha) \equiv S(\psi,\beta)$ holds whenever $(\phi,\alpha)$ and $(\psi,\beta)$ are structurally equivalent.

\bibliographystyle{abbrvnat}

\bibliography{Disjunctions} 

\begin{thebibliography}{11}
\providecommand{\natexlab}[1]{#1}
\providecommand{\url}[1]{\texttt{#1}}
\expandafter\ifx\csname urlstyle\endcsname\relax
  \providecommand{\doi}[1]{doi: #1}\else
  \providecommand{\doi}{doi: \begingroup \urlstyle{rm}\Url}\fi

\bibitem[Cieśliński et~al.(2017)Cieśliński, {\L}ełyk, and
  Wcisło]{cieslinski_lelyk_wcislo}
C.~Cieśliński, M.~{\L}ełyk, and B.~Wcisło.
\newblock Models of $\textnormal{PT}^-$ with internal induction for total
  formulae.
\newblock \emph{The Review of Symbolic Logic}, 10\penalty0 (1):\penalty0
  187--202, 2017.

\bibitem[{Enayat} and {Pakhomov}(2018)]{enayat_pachomow}
A.~{Enayat} and F.~{Pakhomov}.
\newblock {Truth, Disjunction, and Induction}.
\newblock \emph{ArXiv e-prints}, May 2018.

\bibitem[Enayat and Visser(2015)]{enayatvisser}
A.~Enayat and A.~Visser.
\newblock New constructions of satisfaction classes.
\newblock In T.~Achourioti, H.~Galinon, J.~Mart\'inez~Fern\'andez, and
  K.~Fujimoto, editors, \emph{Unifying the Philosophy of Truth}, pages
  321--335. {S}pringer, 2015.

\bibitem[{Ł}ełyk(2017)]{lelykphd}
M.~{Ł}ełyk.
\newblock Axiomatic theories of truth, bounded induction, and reflection
  principles, 2017.

\bibitem[Kaye(1991)]{kaye}
R.~Kaye.
\newblock \emph{Models of {P}eano {A}rithmetic}.
\newblock Oxford{:} Clarendon Press, 1991.

\bibitem[Kotlarski(1986)]{kotlarski}
H.~Kotlarski.
\newblock Bounded induction and satisfaction classes.
\newblock \emph{Zeitschrift f\"ur matematische Logik und Grundlagen der
  Mathematik}, \penalty0 (32):\penalty0 531--544, 1986.

\bibitem[Lachlan(1981)]{lachlan}
A.~H. Lachlan.
\newblock Full satisfaction classes and recursive saturation.
\newblock \emph{Canadian Mathmematical Bulletin}, 24:\penalty0 295--297, 1981.

\bibitem[{\L}ełyk and Wcisło(2017)]{lelykwcislomodels}
M.~{\L}ełyk and B.~Wcisło.
\newblock Models of weak theories of truth.
\newblock \emph{Archive for Mathematical Logic}, 56\penalty0 (5):\penalty0
  453--474, 2017.

\bibitem[Smith(1984)]{smith_phd}
S.~Smith.
\newblock \emph{Nonstandard Syntax and Semantics and Full Satisfaction Classes
  for Models of Arithmetic}.
\newblock Phd thesis, 1984.

\bibitem[Smith(1989)]{smith}
S.~T. Smith.
\newblock Nonstandard definability.
\newblock \emph{Annals of Pure and Applied Logic}, 42\penalty0 (1):\penalty0
  21--43, 1989.

\bibitem[Wcis{\l}o(2018)]{bartek}
B.~Wcis{\l}o.
\newblock \emph{Understanding the Strength of the Compositional Truth}.
\newblock Phd thesis, 2018.

\end{thebibliography}

\end{document}